\newtheorem{thm}{Theorem}[section]
\newtheorem{prop}[thm]{Proposition}
\newtheorem{cor}[thm]{Corollary}
\newtheorem{lem}[thm]{Lemma}
\newtheorem{defn}[thm]{Definition}
\newtheorem{remark}[thm]{Remark}
\newtheorem{example}[thm]{Example}
\renewcommand{\O}{\mathrm{O}}
\renewcommand{\phi}{\varphi}
\newcommand{\Wedge}{\bigwedge}
\newcommand{\R}{\mathbb{R}}
\newcommand{\RR}{\mathbb{R}}
\newcommand{\CC}{\mathbb{C}}
\newcommand{\Pe}{\mathbb{P}}
\newcommand{\ZZ}{\mathbb{Z}}
\newcommand{\cE}{\mathcal{E}}
\newcommand{\cF}{\mathcal{F}}
\newcommand{\cO}{\mathcal{O}}
\newcommand{\cQ}{\mathcal{Q}}
\newcommand{\cI}{\mathcal{I}}
\newcommand{\Sec}{\operatorname{Sec}}
\newcommand{\diag}{\mathrm{diag}}
\newcommand{\comment}[1]{}
\newcommand{\codim}{\operatorname{codim}}
\newcommand{\red}[1]{#1}
      \def\@setcopyright{}
      \def\serieslogo@{}
\begin{document}

\author{Jan Draisma}
\address[Jan Draisma]{Universit\"at Bern, Mathematisches Institut, Sidlerstrasse 5,
3012 Bern, Switzerland; and Department of Mathematics and Computer
Science, Eindhoven University of Technology, The Netherlands}
\email{jan.draisma@math.unibe.ch}
\thanks{The first author was partially supported by the NWO Vici grant
entitled {\em Stabilisation in Algebra and Geometry}.}

\author{Giorgio Ottaviani}
\address[Giorgio Ottaviani]{Universit\`a di Firenze, Dipartimento di Matematica e
Informatica U. Dini, viale Morgagni 67/A, 50134 Firenze, Italy} 
\email{ottavian@math.unifi.it}
\thanks{The second author is member of INDAM-GNSAGA}

\author{Alicia Tocino}
\address[Alicia Tocino]{
Universidad de M\'alaga, Departamento de \'Algebra, Geometr\'ia y
Topolog\'ia, Bulevar Louis Pasteur, 31, 29010 M\'alaga, Spain}
\email{aliciatocinosanchez@ucm.es}

\begin{abstract}
Given a tensor $f$ in a Euclidean tensor space, we are interested in the critical points of the distance function from $f$ to the set of tensors of 
rank at most $k$, which we call the critical rank-at-most-$k$ tensors for $f$. When $f$ is a matrix, the critical rank-one matrices for $f$ correspond to the singular pairs of $f$.
The critical rank-one tensors for $f$ lie in a linear subspace $H_f$, the critical space of $f$. Our main result is that,
for any $k$, the critical rank-at-most-$k$ tensors for a sufficiently general $f$ also lie in the critical
 space $H_f$. This is the part of Eckart-Young Theorem that generalizes from matrices to tensors.
Moreover, we show that when the tensor format satisfies the triangle inequalities, the critical space $H_f$ is
spanned by the complex critical rank-one tensors. Since $f$ itself belongs to $H_f$, we deduce that also $f$ itself is a linear combination of its critical rank-one tensors.
\end {abstract}

\subjclass[2000]{ 15A69, 15A18, 14M17, 14P05}
\keywords{Tensor, Eckart-Young Theorem, Best rank-k approximation}


\title[\red{Best rank-$k$ approximations for tensors}]{\red{Best rank-$k$ approximations for tensors:\\ generalizing
Eckart-Young}}

\maketitle

\section{Introduction}

The celebrated Eckart-Young Theorem says that, for a real $m \times
n$-matrix $A$ with $m \leq n$ and for an integer $k \leq m$, a matrix
$B$ of rank at most $k$ nearest to $A$ is obtained from $A$ as follows:
Compute the singular value decomposition $A=U \Sigma V^T$, where $U,V$
are orthogonal matrices and where $\Sigma=\diag(\sigma_1,\ldots,\sigma_m)$
is the ``diagonal'' $m \times n$-matrix with the singular values $\sigma_1
\geq \cdots \geq \sigma_m \geq 0$ on its main diagonal, and set $B:=U
\diag(\sigma_1,\ldots,\sigma_k,0,\ldots,0) V^T$. Such a best rank-$k$
approximation is unique if and only if $\sigma_k > \sigma_{k+1}$,
and for us ``nearest'' refers to the Frobenius norm (but in fact,
the result holds for arbitrary $\O_m \times \O_n$-invariant norms
\cite{Mirsky}).

For higher-order tensors, an analogous approach for finding best rank-$k$
approximations fails in general \cite{VNVM}. It succeeds, with respect
to the Frobenius norm, for {\em orthogonally decomposable tensors}
\cite{VNVM,BDHR}, but this is a very low-dimensional real-algebraic
variety in the space of all tensors.  In this paper, we will establish
versions of the Eckart-Young Theorem and the Spectral Theorem that {\em
do} hold for general tensors.

To motivate this theorem, consider matrices once again, and
assume that the $\sigma_i$ are distinct and positive. A statement
generalizing the Eckart-Young Theorem says that we obtain all critical
points of the distance function $d_A(B):=||A-B||^2$ on the manifold of
rank-$k$ matrices by setting any $m-k$ of the singular values equal to
zero \cite{DHOST}, so as to obtain a matrix
\[ B_{i_1,\ldots,i_k}:=U
\diag(0,\ldots,0,\sigma_{i_1},0,\ldots,0,\sigma_{i_2},0,
\ldots,0, \sigma_{i_k},0,\ldots) V^T \]
for any ordered $k$-tuple $i_1<\ldots<i_k$ in $\{1,\ldots,m\}$. We will
call these critical points {\em critical rank-$k$ matrices for $A$}.
In particular, the critical rank-one matrices are $B_1,\ldots,B_m$, and
we draw attention to the fact that for each $k \geq 1$ and each $k$-tuple
$i_1<\ldots<i_k$ the critical rank-$k$ matrix $B_{i_1,\ldots,i_k}$ lies
in the linear span of $B_1,\ldots,B_m$. Moreover, this linear span has
a direct description in terms of $A$: it consists of all matrices $B$
such that both $A^TB$ and $AB^T$ are symmetric matrices.

Taking cue from this observation, we will associate a {\em critical space}
$H_f$ to a tensor $f$,
show that $H_f$ contains the critical \red{rank-at-most-$k$}
tensors for $f$ for each value of $k$ (see below for a definition),
and that $H_f$ is spanned by the critical rank-one tensors for $f$. We
will establish these results for sufficiently general partially symmetric tensors, and we work over the base field $\CC$ rather than $\RR$.

\begin{thm} \label{thm:Main}
Let $f$ be a sufficiently general tensor in $S^{d_1} \CC^{n_1+1} \otimes
\cdots \otimes S^{d_p} \CC^{n_p+1}$. Then for each natural number $k$,
the critical \red{rank-at-most-$k$} tensors for $f$ lie in the critical
space $H_f$.  Moreover, if for each $\ell$ with $d_\ell=1$ the {\em
triangle inequality} $n_\ell\leq\sum_{i \neq \ell} n_i $ holds, then
$\codim H_f=\sum_\ell \binom{n_\ell + 1}{2}$ and $H_f$ is spanned by
the critical rank-one tensors for $f$.
\red{In particular, $f$ itself lies in the linear
span of the critical rank-one tensors for $f$.}
\end{thm}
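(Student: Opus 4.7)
The plan is to establish three substantive assertions --- containment of critical rank-at-most-$k$ tensors in $H_f$, the codimension formula, and the spanning by critical rank-one tensors --- and deduce the final ``in particular'' statement as a corollary. The unifying tool is that $H_f$ should be presented as the kernel of an explicit linear map $\phi_f: V \to \bigoplus_\ell W_\ell$ with $V = S^{d_1}\CC^{n_1+1}\otimes\cdots\otimes S^{d_p}\CC^{n_p+1}$ and $\dim W_\ell = \binom{n_\ell+1}{2}$, arising from a bilinear pairing $\phi$ that is antisymmetric in its two inputs, i.e.\ $\phi_f(x) = \phi(f,x) = -\phi(x,f)$. This is modeled on the matrix case $\phi_A(B) = (\mathrm{skew}(A^TB),\mathrm{skew}(AB^T))$.

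For the containment, I would first check directly from the critical-point equations that any critical rank-one tensor $y$ for $f$ satisfies $\phi_f(y)=0$: this is a slicewise computation showing that the contractions of $f$ against factors of $y$ defining $\phi$ already match those of $y$ itself (in the matrix case, this is the routine verification that for a singular pair $(u,v)$ with $Av=\sigma u$, $A^Tu=\sigma v$, the matrix $uv^T$ lies in the critical space of $A$). For a rank-at-most-$k$ critical tensor $x$, Terracini's lemma at a generic smooth decomposition $x = x_1+\cdots+x_k$ gives $T_x\sigma_k = \sum_i T_{x_i}\sigma_1$, so the critical condition implies that each $x_i$ is critical rank-one for $f - \sum_{j\neq i}x_j$. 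Combining bilinearity and antisymmetry of $\phi$ then yields
\[
\phi_f(x) = \sum_i \phi_f(x_i) = \sum_i\sum_{j\neq i}\phi_{x_j}(x_i) = \sum_{i<j}\bigl(\phi(x_j,x_i)+\phi(x_i,x_j)\bigr) = 0,
\]
so $x \in H_f$.

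For the codimension, I would compute $\dim\operatorname{im}\phi_f$ by showing that $\phi_f$ is surjective for generic $f$. I would exhibit one specific tensor (for instance built from independent well-chosen rank-one summands) for which surjectivity is visible from direct contraction, then conclude by upper semicontinuity of rank. The role of the triangle inequalities on those $\ell$ with $d_\ell=1$ is precisely to rule out an automatic rank drop in the corresponding blocks of $\phi_f$.

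For the spanning, I would compare dimensions: the number of complex critical rank-one tensors for a generic $f$ is the Euclidean distance degree of the Segre-Veronese variety, which under the triangle inequalities has a closed form that equals $\dim H_f$ as just computed. It therefore suffices to prove these critical rank-one tensors are linearly independent for generic $f$, and this is the step I expect to be the main obstacle. My plan of attack is a degeneration/limit argument: exhibit a special tensor (e.g.\ an orthogonally decomposable one, or a limit for which the critical system can be solved explicitly) whose critical rank-one tensors form a linearly independent set, then conclude by openness of linear independence. The final ``in particular'' sentence is then immediate: antisymmetry forces $\phi(f,f)=0$, so $f \in H_f$, and under the triangle inequalities $H_f$ coincides with the linear span of the critical rank-one tensors for $f$.
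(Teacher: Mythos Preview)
Your argument for the containment of critical rank-at-most-$k$ tensors in $H_f$ is essentially the paper's own argument (Proposition~\ref{prop:HigherRank}), just reorganized: the paper writes $[f|g]_\ell=[f-g|g]_\ell+[g|g]_\ell$ and shows each piece vanishes, while you expand $\phi_f(x)=\sum_i\phi_f(x_i)$ first; both rest on the same skew-symmetric pairing and the observation that $f-g\perp T_{x_i}X$ for each summand $x_i$. Your plan for the codimension formula (exhibit one $f$ for which $\phi_f$ is surjective, then invoke semicontinuity) is different from the paper's route, which obtains the codimension only \emph{after} establishing the spanning via cohomological means, but is a reasonable independent strategy.

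The genuine gap is in your spanning argument. You claim that the Euclidean distance degree of the Segre--Veronese variety---the number of complex critical rank-one tensors for generic $f$---equals $\dim H_f$, and you propose to show these points are linearly independent. This equality is false in general. Already for ordinary $2\times 2\times 2$ tensors (all $d_\ell=1$, all $n_\ell=1$, so the triangle inequalities hold) one has $\dim T=8$, $\codim H_f=3\binom{2}{2}=3$, hence $\dim H_f=5$, whereas the ED degree (the number of singular vector triples) is $6$. For $3\times 3\times 3$ tensors the discrepancy widens: $\dim H_f=27-9=18$ but the ED degree is $37$. In the symmetric case $S^d\CC^{n+1}$ the number of eigenvectors is $\frac{(d-1)^{n+1}-1}{d-2}$, which exceeds $\dim H_f=\binom{n+d}{d}-\binom{n+1}{2}$ as soon as $d,n$ are both at least moderately large (e.g.\ $d=4$, $n=2$ gives $13$ versus $12$). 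So the critical rank-one tensors are typically \emph{linearly dependent}, and no degeneration argument can rescue linear independence. The coincidences you may have in mind (matrices, binary forms, plane cubics) are low-dimensional accidents.

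The paper handles the spanning by an entirely different method: it realizes the critical rank-one locus $Z_f$ as the zero scheme of a section of a rank-$N$ bundle $\cE$ on $\Pe X$, and uses the Koszul resolution of $\cI_{Z_f}$ together with Bott's formulas (this is where the triangle inequalities enter, via Lemma~\ref{lem:ConsBott}) to prove that $H^0(\cE^*\otimes\cO(d_1,\ldots,d_p))\to H^0(\cI_{Z_f}\otimes\cO(d_1,\ldots,d_p))$ is an isomorphism. The left-hand side is then identified with $\bigoplus_\ell(\bigwedge^2 V_\ell)^*$, and the map sends $\xi$ to the linear form $g\mapsto\xi([f|g]_\ell)$, which are exactly the equations of $H_f$. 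Thus $\langle Z_f\rangle=\Pe H_f$ and the codimension formula fall out simultaneously. To repair your proposal you would need a replacement for this cohomological step; a purely counting-plus-independence argument cannot work.
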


We record the following two corollaries over the real numbers.

\begin{cor} \label{cor:tensor}
If $n_1,\ldots,n_p$ satisfy the triangle inequality $n_\ell \leq \sum_{i
\neq \ell} n_i$ for each $\ell=1,\ldots,p$, then for a sufficiently
general tensor $f \in \bigotimes_{i=1}^p \RR^{n_i+1}$ and any natural
number $k$, any real tensor of real rank at most $k$ closest to $f$
in the Frobenius norm lies in the linear span of the complex critical
rank-one tensors for $f$. In particular, $f$ itself lies in the linear
span of the complex critical rank-one tensors for $f$.
\end{cor}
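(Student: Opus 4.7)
My plan is to derive the corollary directly from Theorem~\ref{thm:Main}, applied with $d_1=\cdots=d_p=1$, so that the partially symmetric tensor space $S^{d_1}\CC^{n_1+1}\otimes\cdots\otimes S^{d_p}\CC^{n_p+1}$ reduces to the ordinary tensor product $\bigotimes_i\CC^{n_i+1}$. In this regime, the triangle-inequality hypothesis demanded by the second half of Theorem~\ref{thm:Main} for each $\ell$ with $d_\ell=1$ is exactly the hypothesis of the corollary. Therefore, for a sufficiently general $f\in\bigotimes_i\CC^{n_i+1}$, the critical space $H_f$ contains every complex critical rank-at-most-$k$ tensor for $f$, is spanned by the complex critical rank-one tensors for $f$, and contains $f$ itself.

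The next step is to reduce the real statement to this complex one. The "sufficiently general" hypothesis in Theorem~\ref{thm:Main} defines a proper Zariski-closed subset $Z\subset\bigotimes_i\CC^{n_i+1}$ off which all stated conclusions hold; since $\bigotimes_i\RR^{n_i+1}$ is Zariski-dense in the complex ambient space, $Z\cap\bigotimes_i\RR^{n_i+1}$ is a proper real algebraic subvariety, and in particular a Euclidean-closed subset of real measure zero. A sufficiently general real $f$ therefore avoids $Z$, so Theorem~\ref{thm:Main} is at our disposal. The remaining point is that a real best rank-$k$ approximation $B$ to $f$ is a complex critical rank-at-most-$k$ tensor for $f$. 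Here I would observe that the real rank-$\leq k$ tensors are contained in the real locus of the complex rank-$\leq k$ variety $X_k$, and that being a critical point of $\|f-\cdot\|^2$ at a smooth point of $X_k$ is the linear condition $f-B \perp T_B X_k$, which is insensitive to the base field. For generic $f$, the minimum is attained on the smooth locus of $X_k$ (the singular locus has strictly smaller dimension, and a generic distance function has no critical points there), so $B$ is indeed a critical rank-at-most-$k$ tensor in the complex-algebraic sense used by Theorem~\ref{thm:Main}.

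Combining both ingredients, $B\in H_f$, which by Theorem~\ref{thm:Main} coincides with the complex linear span of the critical rank-one tensors for $f$; and the same theorem gives $f\in H_f$. Both conclusions of Corollary~\ref{cor:tensor} follow.

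The step I expect to be the main obstacle is the verification that a real best rank-$k$ approximation is a critical point in the scheme-theoretic sense used by Theorem~\ref{thm:Main}. The rank-$\leq k$ locus is typically highly singular, so some care is required to ensure that the closest point to a generic $f$ lies on the smooth stratum, where the notions of critical point over $\RR$ and over $\CC$ coincide; everything else in the argument is bookkeeping that transfers genericity from $\CC$ to $\RR$ and specialises Theorem~\ref{thm:Main} to the ordinary-tensor case.
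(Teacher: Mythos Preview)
Your overall strategy is sound, and the passage from complex to real genericity is handled correctly. However, the route you take to place the real best approximation $B$ inside $H_f$ is more delicate than the paper's, and the justification you offer for the ``main obstacle'' is not quite enough.

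The paper does \emph{not} try to verify that $B$ is a critical rank-at-most-$k$ tensor in the precise sense of the definition (i.e., that $f-B\perp T_B\Sec_k(X)$ for the Zariski tangent space of the complex secant variety). Instead it re-runs the computation inside Proposition~\ref{prop:HigherRank} directly: since $B$ has real rank at most $k$, write $B=x_1+\cdots+x_k$ with each $x_i$ a \emph{real} point of $X$; then each $x_i$ is automatically non-isotropic, and because $B$ minimises the real distance one has $f-B\perp T_{x_i}X$ for every $i$ (just vary one summand at a time). Lemma~\ref{lem:Pairing} and skew-symmetry then give $[f|B]_\ell=0$ for all $\ell$, so $B\in H_f$ without ever touching the smoothness of $\Sec_k(X)$. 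Proposition~\ref{prop:dim} finishes the job. A bonus of this argument, noted right after the proof in the paper, is that $B\in H_f$ holds for \emph{every} real $f$, not only generic ones; genericity is used only to identify $H_f$ with the span of the critical rank-one tensors.

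By contrast, your proposed fix---``for generic $f$ the minimum is attained on the smooth locus of $\Sec_k(X)$''---is not clearly true as stated. The real rank-$\le k$ locus is neither closed nor equal to the real points of $\Sec_k(X)$, so the usual Sard-type argument that a generic distance function avoids the singular locus does not immediately apply to a \emph{real} minimiser over this semialgebraic set. Even granting smoothness, you would still need $f-B$ to be orthogonal to the full Zariski tangent space $T_B\Sec_k(X)$, whereas minimality only gives orthogonality to the tangent cone of the real rank-$\le k$ locus. The paper's shortcut via the explicit decomposition of $B$ sidesteps all of this.
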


\begin{cor}\label{cor:sym}
For a sufficiently general symmetric tensor $f \in S^d \RR^{n+1}$ and any
natural number $k$, any real symmetric tensor of real symmetric rank at
most $k$ closest to $f$ in the Frobenius norm lies in the linear span of
the complex critical symmetric rank-one tensors for $f$.  In particular,
$f$ itself lies in the linear span of the complex critical rank-one
tensors for $f$.
\end{cor}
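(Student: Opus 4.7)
My plan is to deduce Corollary \ref{cor:sym} from Theorem \ref{thm:Main} applied in the single-factor symmetric case $p=1$, $d_1=d$, $n_1=n$. The edge case $d=1$ reduces to $f\in\RR^{n+1}$, in which every tensor has rank at most one and the statement is trivial, so I may assume $d\geq 2$. Then no index $\ell$ satisfies $d_\ell=1$, and the triangle-inequality hypothesis of Theorem \ref{thm:Main} is vacuously satisfied. The theorem therefore supplies a critical space $H_f\subseteq S^d\CC^{n+1}$ such that: (a) every complex critical rank-at-most-$k$ symmetric tensor for $f$ lies in $H_f$; (b) $H_f$ is $\CC$-spanned by the finitely many complex critical symmetric rank-one tensors for $f$; and (c) $f\in H_f$. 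Part (c) immediately gives the ``in particular'' assertion.

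For the main statement, let $B$ be a real symmetric tensor of real symmetric rank at most $k$ closest to $f$ in the Frobenius norm. The key step will be to argue that $B$ is a complex critical rank-at-most-$k$ tensor for $f$, so that (a) and (b) conclude the proof. For a sufficiently general real $f$, any such minimizer $B$ may be taken to be a smooth point of the complex symmetric rank-at-most-$k$ variety, and locally near $B$ this variety is the complexification of the real manifold of real rank-$\leq k$ symmetric tensors (because the parametrization by sums of real rank-one terms is itself a real map). Hence the real orthogonality condition $\langle f-B,\,T\rangle=0$ for every real tangent vector $T$ at $B$ extends by $\CC$-bilinearity of the symmetric form to $\langle f-B,\,T_\CC\rangle=0$ for every complex tangent vector, exhibiting $B$ as a complex critical rank-at-most-$k$ tensor for $f$. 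By (a) and (b), $B$ is a $\CC$-linear combination of the complex critical symmetric rank-one tensors for $f$, as required.

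The main technical obstacle I anticipate is verifying that a real best approximation $B$ sits at a smooth point of the complex secant variety, and that the real tangent space at $B$ complexifies to the full complex tangent space. Both are genericity statements about $f$: the set of $f\in S^d\RR^{n+1}$ admitting a best real rank-$\leq k$ approximation inside the singular locus of the complex variety is contained in a proper Zariski-closed subset (this is the symmetric analogue of the exclusion of matrices with repeated singular values in classical Eckart-Young), and such $f$ can be avoided. Once this is settled, the remainder of the argument is formal.
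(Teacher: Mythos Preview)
Your overall strategy---specialize Theorem~\ref{thm:Main} to the single-factor case $p=1$, where for $d\geq 2$ the triangle inequality is vacuous---matches the paper. Where you diverge is in how you place the real minimizer $B$ inside $H_f$, and your route introduces an obstacle that the paper simply avoids.

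The paper does \emph{not} argue that $B$ is a complex critical rank-at-most-$k$ tensor in the sense of the definition (which would require $B$ to be a smooth point of $\Sec_k(X)$ and $f-B$ orthogonal to the full complex tangent space there). Instead it re-runs the \emph{argument} of Proposition~\ref{prop:HigherRank} directly on $B$: since $B$ has real symmetric rank at most $k$, write $B=x_1+\cdots+x_k$ with each $x_i$ a \emph{real} point of $X$. Real rank-one tensors are automatically non-isotropic for the standard bilinear form, so the only genericity used in that proposition's proof comes for free here, with no hypothesis on $f$. First-order optimality of $B$ (deform one $x_i$ at a time inside $X_\RR$) gives $f-B\perp T_{x_i}X_\RR$, hence $f-B\perp T_{x_i}X$ by $\CC$-bilinearity, and then Lemmas~\ref{lem:Tangent} and~\ref{lem:Pairing} yield $[f-B\,|\,x_i]=0$, whence $[f\,|\,B]=0$ and $B\in H_f$. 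Only the identification $H_f=\langle Z_f\rangle$ via Proposition~\ref{prop:dim} uses that $f$ is sufficiently general.

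Your approach, by contrast, needs $B$ to land in the smooth locus of the complex secant variety with the real and complex tangent spaces matching there. You correctly flag this as the main obstacle, but it is more delicate than your parenthetical suggests: the real symmetric rank-$\leq k$ locus is a semi-algebraic set that in general sits strictly inside the real points of $\Sec_k(X)$, so the claim that a generic $f$ has no minimizer in the singular locus is not a routine dimension count. It may well be true, but it is unnecessary---the paper's direct argument bypasses secant-variety smoothness entirely. Your handling of the ``in particular'' clause (via $f\in H_f$) is fine; the paper instead takes $k$ equal to the rank of $f$, which amounts to the same thing.
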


In the case of symmetric tensors, these critical rank-one tensors
correspond to the so-called {\em eigenvectors} of $f$ \cite{Mac},
while in the case of ordinary tensors, they correspond to {\em singular
vector tuples} \cite{Lim}. In the case $n=1$ of binary forms, Corollary
\ref{cor:sym} was proved in \cite{OT}. The two corollaries above can be
regarded as generalizations of the Eckart-Young Theorem and the Spectral
Theorem from matrices to tensors.

Several remarks are in order here. First, we complexify $d_f$ to
the quadratic polynomial $d_f(u):=(u-f|u-f)$, where $(.|.)$ is the
standard complex bilinear form on the space of tensors (and not a
Hermitian form). The point of doing this is that, unlike for matrices,
the critical points of this function on low-rank tensors are in general
not real anymore, even if $f$ is real. Accordingly, the critical space
$H_f$, while defined by linear equations over $\RR$ if $f$ is real, is
taken to be the space of complex solutions to those equations. Second,
we denote the dimensions by $n+1$ rather than $n$ since we will be using
methods from projective algebraic geometry where the formulas look
more appealing in terms of the projective dimension $n$ than in the
affine dimension $n+1$. An example of this phenomenon is the triangle
inequalities in the theorem, which hold if and only if the variety dual
to the Segre-Veronese embedding of the product $\Pe^{n_1} \times
\cdots \times \Pe^{n_p}$ via degrees $d_1,\ldots,d_p$ 
is a hypersurface \cite[Corollary 5.11]{GKZ}.

The remainder of this paper is organized as follows. In
Section~\ref{sec:CritSpace} we define the critical space
$H_f$ for a partially symmetric tensor $f$ and prove that the
critical \red{rank-at-most-$k$} tensors for $f$ lie in $H_f$. Then, in
Section~\ref{sec:Span} we use vector bundle techniques to compute the
dimension of the space spanned by the critical rank-one tensors for $f$
and to show that this space equals $H_f$. In Section~\ref{sec:Final}
we combine these ingredients to establish the results above.

\section{The critical space of a tensor}
\label{sec:CritSpace}

\subsection*{Partially symmetric tensors and their ranks}

Let $p \in \ZZ_{\geq 1}$, let $V_1,\ldots,V_p$ be complex vector spaces,
and let $d_1,\ldots,d_p \in \ZZ_{\geq 1}$. Let $S^dV$ be the $d$-th symmetric power of $V$. We will study
tensors in the space 
\[ T:=S^{d_1} V_1 \otimes \cdots \otimes S^{d_p} V_p. \]
So for $p=1$, $T$ is a symmetric power of $V_1$, which is canonically
isomorphic with the space of symmetric, $d_p$-way $n_1 \times \cdots
\times n_1$-tensors. On the other hand, when all $d_i$ are equal to $1$,
then $T$ is a space of $p$-way ordinary tensors. We will
write $[p]:=\{1,\ldots,p\}$. 

Inside $T$, let $X$ be the set of all tensors of the form 
\[ v_1^{d_1} \otimes \cdots \otimes v_p^{d_p}\ \quad 
(v_\ell \in V_\ell, \ell \in [p]).\]
Then $X$ is a closed subvariety of $T$ known as the affine cone over the
Segre-Veronese embedding of $\Pe^{n_1} \times \cdots \times \Pe^{n_p}$
of degrees $(d_1,\ldots,d_p)$. Let $kX$ denote the set of sums of $k$
elements of $X$.  An arbitrary element $t$ of $T$ lies in $kX$ for some $k$,
and the minimal such $k$ is called the rank of $t$ \cite[Definition
5.2.1.1]{Landsberg12}. For $p=1$
this is the symmetric or Waring rank, and if all $d_q$ are $1$, this notion is
ordinary tensor rank. We write $\Sec_k(X)$ for the Zariski
(or Euclidean) closure of $kX$ in $T$.

For real tensors a few modifications are needed. The real rank of a real
tensor $t$ is the minimum $k$ such that $t=\sum_{i=1}^k\lambda_ix_i$
with $\lambda_i\in{\mathbb R}$ and $x_i\in X_{\R}$ (it is enough to allow $\lambda_i=\pm 1$). For example {$(e_1+ie_2)^3+(e_1-ie_2)^3$} has complex rank $2$ and real rank $3$. Real rank is subtle for low-rank approximation of tensors.
A classical example of de Silva and Lim \cite{dSL} shows that for almost all $2\times 2\times 2$-tensors of real rank $3$ (like the above one) does not exist a closest tensor
of real rank $2$, while such phenomena may happen only for measure zero subsets of
the set of complex tensors of given rank.

\subsection*{Symmetric bilinear forms and pairings}

If $V,W$ are complex vector spaces with symmetric bilinear forms $(.|.)$,
and if $d \in \ZZ_{\geq 0}$, then $S^d V$ and $V \otimes W$ carry unique
symmetric bilinear forms, also denoted $(.|.)$, that satisfy

\begin{align*} 
(v^d|v'^d)&:=(v|v')^d\text{ and} \\
(v \otimes w | v' \otimes w') &:= (v|v') (w|w').
\end{align*}

{The first of these equalities implies

\begin{align*}
(v_1\ldots v_d|v'^d)&=\prod_{i=1}^d(v_i|v')
\end{align*}
}
and more in general
\begin{align*}
(v_1 \cdots v_d | v'_1 \cdots v'_d)&= \frac{1}{d!} \sum_{\pi \in S_d}
\prod_{i=1}^d (v_i|v'_{\pi(i)}) 
\end{align*}

We now fix nondegenerate symmetric bilinear forms on each $V_\ell,\
\ell \in [p]$. Then, iterating
these constructions, we obtain a canonical bilinear form on $T$.

Using the bilinear forms on $V$ and $W$, we can also build more
general bilinear maps whose outputs are vectors or tensors rather than
scalars. We will call these bilinear maps {\em pairings} and denote them
by $[.|.]$. Of particular relevance to us is the skew-symmetric pairing $S^d V \times
S^d V \to \Wedge^2 V$
determined by

\[ 
[v^d | w^d]:=
 (v|w)^{d-1}  
v \wedge w
\]

which implies

{
\[
[v_1\ldots v_d | w^d]=\frac{1}{d}\sum_{i'\in[d]}
\left(\prod_{i\neq i'}(v_i|w)\right)v_{i'}\wedge w
\]
}

and more in general

\[ 
[v_1 \cdots v_d | w_1 \cdots w_d]=
\frac{1}{d\cdot d!}\sum_{i',j' \in [d]} \sum_{\pi:[d]\setminus i' \to
[d]\setminus j'}
\left( \prod_{i \neq i'} (v_i|w_{\pi(i)}) \right) 
v_{i'} \wedge w_{j'}
\]
where $\pi$ runs over all bijections $[d]\setminus i' \to [d]\setminus
j'$. 

\begin{remark}
In the case of binary forms ($\dim V=2$ and arbitrary $d$), the pairing $[f|g]$
coincides (up to scalar multiples) with $\left( f| D(g)\right)$, where
	$D(g)=g_xy-g_yx$\red{;} see \cite{OT}. Note the skew-symmetry property $\left(
f| D(g)\right) = -\left( g| D(f)\right)$. On the other hand, in the case
of symmetric matrices ($d=2$ and arbitrary $V$), the pairing $[f|g]$
coincides (up to scalar multiples) with the bracket $fg-gf$.
\end{remark}

Building on this construction, for each $\ell \in [p]$ we define
a pairing $[.|.]_\ell:T \times T \to \Wedge^2 V_\ell$ by
\begin{equation} \label{eq:Pairing} 
[f_1 \otimes \cdots \otimes f_p|g_1 \otimes \cdots \otimes
g_p]_\ell:=\left(\prod_{i \neq \ell} (f_i|g_i)\right) [f_\ell|g_\ell],\
f_i,g_i \in S^{d_i} V_i.
\end{equation}
which we will use to define the critical space. 

\begin{remark}\label{rem:bracket2}In the case of matrices $T=V_1\otimes V_2$, the pairing $[f,g]_1$
coincides (up to scalar multiples) with $fg^t-gf^t$,
while  $[f,g]_2$
is (up to scalar multiples) $f^tg-g^tf$.
\end{remark}

{
\subsection*{Basis, orthogonal basis and monomials}

If $B$ is a basis of $V$, then the degree-$d$ monomials in
the elements of $B$ form a basis of $S^d V$.
Such a basis is orthogonal if $B$ is orthogonal.
Hence if we fix bases (respectively, orthogonal bases) of $V_1,\ldots,V_p$, then by taking
tensor products we obtain a basis (respectively, orthogonal basis) of $T$, whose elements we
will call monomials of degree $D:=\sum_{\ell=1}^p d_\ell$.} We will use
the word {\em gcd} of two such monomials $x,y$ for the highest-degree
monomial $z$ such that both $x$ and $y$ can be obtained from $z$ by
multiplying $z$ with suitable monomials.

\begin{example}
If $p=3$ and $V_1=V_2=V_3=\CC^3$ with the standard bilinear
form, and $d_1=d_2=3$ and $d_3=2$, then the gcd of 
$(e_1^2 e_2) \otimes (e_1e_2e_3) \otimes (e_1^2)$ and
$(e_1 e_2 e_3) \otimes (e_3^3) \otimes (e_2e_3)$ equals 
$(e_1 e_2) \otimes (e_3) \otimes (1)$. 
\end{example}

\begin{lem} \label{lem:Pairing}
For two monomials $f=f_1 \otimes \cdots \otimes f_p,g=g_1 \otimes
\cdots \otimes g_p$ in $T$ relative to the same orthogonal bases of
$V_1,\ldots,V_p$ and for $\ell \in [p]$
we have $[f|g]_\ell=0$ unless $f_i=g_i$ for all $i \neq \ell$ and
$h:=\gcd(f_\ell,g_\ell)$ has degree $d_\ell-1$; in this case $\gcd(f,g)$
has degree $D-1$ and $[u|v]_\ell \in \CC^* (f_\ell/h) \wedge (g_\ell/h)$. 
\end{lem}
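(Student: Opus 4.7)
The plan is to unpack the explicit expansion formulas for $(\cdot|\cdot)$ and $[\cdot|\cdot]$ on symmetric products, exploiting orthogonality of the monomial basis of each $S^{d_i}V_i$ induced by the chosen orthogonal basis of $V_i$.

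First, I would note that for two monomials $u,u'\in S^{d_i}V_i$ in this basis, if their multisets of variables differ then $(u|u')=0$: every permutation appearing in the defining symmetric sum must pair some basis vector in $u$ with a different basis vector in $u'$, producing a zero factor. Substituting this into (\ref{eq:Pairing}), the scalar $\prod_{i\neq\ell}(f_i|g_i)$ vanishes unless $f_i=g_i$ for every $i\neq\ell$. Under this necessary condition we have $\gcd(f,g)=\gcd(f_\ell,g_\ell)\otimes\bigotimes_{i\neq\ell}f_i$, and $[f|g]_\ell$ is a nonzero scalar multiple of $[f_\ell|g_\ell]$, so everything reduces to analysing $[f_\ell|g_\ell]$.

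Writing $f_\ell=e_{a_1}\cdots e_{a_{d_\ell}}$ and $g_\ell=e_{b_1}\cdots e_{b_{d_\ell}}$ and applying the expansion of $[\cdot|\cdot]$, a summand indexed by $(i',j',\pi)$ is nonzero only when $a_i=b_{\pi(i)}$ for all $i\neq i'$; such a bijection $\pi$ exists iff $f_\ell/e_{a_{i'}}$ and $g_\ell/e_{b_{j'}}$ are the same monomial of degree $d_\ell-1$. Hence $[f_\ell|g_\ell]\neq 0$ forces $\deg\gcd(f_\ell,g_\ell)\geq d_\ell-1$, and the case $f_\ell=g_\ell$ is ruled out by the skew-symmetry $[f_\ell|f_\ell]=0$, giving $\deg\gcd=d_\ell-1$. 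Setting $h=\gcd(f_\ell,g_\ell)$, $\alpha=f_\ell/h$ and $\beta=g_\ell/h$ (distinct basis vectors), a surviving summand must have $a_{i'}=\alpha$ and $b_{j'}=\beta$, so every surviving term is the same nonzero scalar multiple of $\alpha\wedge\beta$, and summation yields a nonzero element of $\CC^*\cdot(f_\ell/h)\wedge(g_\ell/h)$.

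The main obstacle is the no-cancellation assertion in this last step: one must exclude the alternative $a_{i'}=\beta$, $b_{j'}=\alpha$, which would contribute $\beta\wedge\alpha=-\alpha\wedge\beta$. But this alternative would require $f_\ell/\beta=g_\ell/\alpha$ as monomials of degree $d_\ell-1$, which is impossible because $\alpha$ occurs strictly more often in $f_\ell$ than in $g_\ell$ (and dually for $\beta$). Thus all surviving wedges have the same orientation, and summation cannot produce zero.
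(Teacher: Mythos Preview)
Your proof is correct and is exactly the detailed unpacking that the paper omits: the paper's own justification is the single sentence ``This is immediate from the definition of the pairing in \eqref{eq:Pairing}.'' One small remark on presentation: your multiplicity argument in the last paragraph (that $\alpha$ occurs strictly more often in $f_\ell$ than in $g_\ell$) in fact excludes \emph{every} choice with $e_{a_{i'}}\neq\alpha$, not just the particular alternative $(\beta,\alpha)$ you single out, so it already fully justifies the earlier assertion that a surviving summand must have $e_{a_{i'}}=\alpha$ and $e_{b_{j'}}=\beta$; equivalently, the common degree-$(d_\ell-1)$ divisor $f_\ell/e_{a_{i'}}=g_\ell/e_{b_{j'}}$ must coincide with $h$ since $h=\gcd(f_\ell,g_\ell)$ has the same degree.
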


This is immediate from the definition of the pairing in
\eqref{eq:Pairing}.

\subsection*{Critical rank-one tensors}

Let $f \in T$. Then the critical points of the distance function $d_f:
x \mapsto (f-x|f-x)$ on $X$ are by definition those $x \in X \setminus
\{0\}$ for which $f-x$ is perpendicular to the tangent space $T_x X$ to
$X$ at $x$; we write this as $f-x \perp T_x X$. We call these tensors the
{\em critical rank-one tensors} for $f$. For sufficiently general $f$,
each of these critical rank-one tensors is {\em non-isotropic}, i.e.,
satisfies $(x|x) \neq 0$ (see \cite[Lemma 4.2]{DLOT}, in next Proposition
\ref{prop:RankOne} we will prove a slightly more general fact).

\comment{
Slightly
more generally, we call a non-isotropic $x$ a {\em singular rank-one
tensor} for $f$ if $f-cx \perp T_x X$ for some $c \in \CC$.  In that
case, since $x$ is non-isotropic and since $x \in T_xX$, the $c$ with
this property is the unique $c$ with $(f-cx|x)=0$; and if $c \neq 0$,
then $cx \in X$ is a critical point of $d_f$.
}

We will establish a bilinear characterization of these critical rank-one
tensors for $f$. First, we describe the tangent space of $X$ {at a point $x$} in more
detail.
For this, write
\begin{equation} \label{eq:RankOne}
x=v_1^{d_1} \otimes \cdots \otimes v_p^{d_p}. 
\end{equation}
  {Hence we may
extend each $v_\ell$ to a basis of $V_\ell$. We then obtain
an $x$-{\em adapted} basis of $T$ consisting of monomials.
If moreover $x$ is non-isotropic, we have $(v_\ell|v_\ell) \neq 0$ and we may extend each $v_\ell$ to an orthogonal basis.
 We then obtain
an $x$-{\em adapted} orthogonal basis of $T$.
}

{
\begin{lem} \label{lem:Tangent}
Let $x\in X$ as in \eqref{eq:RankOne}.
\begin{enumerate}
\item   Then, relative
to any $x$-adapted basis, $T_x X$ is spanned by all degree-$D$
monomials whose gcd with $x$ has degree at least $D-1$.

\item  Assume moreover that $x$ is non-isotropic. Then, relative
to any $x$-adapted orthogonal basis, $(T_x X)^\perp$ is spanned by all degree-$D$
monomials whose gcd with $x$ has degree at most $D-2$.
\end{enumerate}
\end{lem}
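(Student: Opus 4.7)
The plan is to compute $T_xX$ by the standard parametrization of the Segre-Veronese cone and then read off the answer in terms of the chosen adapted basis. Parametrize $X$ by
\[
\varphi\colon V_1 \times \cdots \times V_p \to T,\qquad (w_1,\ldots,w_p) \mapsto w_1^{d_1}\otimes \cdots \otimes w_p^{d_p},
\]
so that, since $x\neq 0$ forces all $v_\ell \neq 0$ and makes $x$ a smooth point of $X$, the tangent space $T_xX$ equals the image of the differential $d\varphi$ at $(v_1,\ldots,v_p)$. A direct Leibniz computation shows this image is spanned by the vectors
\[
v_1^{d_1}\otimes \cdots \otimes v_{\ell-1}^{d_{\ell-1}}\otimes v_\ell^{d_\ell-1}u_\ell \otimes v_{\ell+1}^{d_{\ell+1}}\otimes\cdots\otimes v_p^{d_p}
\]
as $\ell$ ranges over $[p]$ and $u_\ell$ over $V_\ell$.

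To establish (1), I would fix the $x$-adapted basis $v_\ell=e_{\ell,0},e_{\ell,1},\ldots,e_{\ell,n_\ell}$ of each $V_\ell$ and let $u_\ell$ run over this basis. The resulting generators of $T_xX$ are exactly the monomials
\[
v_1^{d_1}\otimes\cdots\otimes (v_\ell^{d_\ell-1}e_{\ell,i})\otimes\cdots\otimes v_p^{d_p},
\]
each of which either equals $x$ (when $i=0$) or shares with $x$ the degree-$(D-1)$ monomial $v_1^{d_1}\otimes\cdots\otimes v_\ell^{d_\ell-1}\otimes\cdots\otimes v_p^{d_p}$. Conversely, any adapted-basis monomial $m$ of degree $D$ with $\deg\gcd(m,x)\geq D-1$ must agree with $x$ in every tensor factor except possibly one position $\ell$, and in that factor must differ from $v_\ell^{d_\ell}$ by replacing a single copy of $v_\ell$ by another basis vector; this is precisely the form above. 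Hence the spanning set matches the one claimed in (1).

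For (2), I would extend each non-isotropic $v_\ell$ to an orthogonal basis of $V_\ell$; nondegeneracy of the bilinear form guarantees that each additional basis vector is non-isotropic too. The formula for $(v_1\cdots v_d|w_1\cdots w_d)$ quoted earlier in the paper then implies that the induced monomial basis of each $S^{d_\ell}V_\ell$ is orthogonal with nonzero self-pairings, and therefore so is its tensor product, which is the given $x$-adapted orthogonal basis of $T$. Consequently $(T_xX)^\perp$ is spanned by exactly the complementary basis monomials, namely those whose gcd with $x$ has degree at most $D-2$, giving (2).

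I do not expect a real obstacle. The only step that needs care is the combinatorial equivalence ``$\deg\gcd(m,x)\geq D-1$ iff $m$ arises from $d\varphi$'' used in the proof of (1), which is immediate from the additivity of the gcd degree across tensor factors; and, for (2), the routine verification that orthogonality with non-isotropic basis vectors propagates from $V_\ell$ to $S^{d_\ell}V_\ell$ and hence to $T$.
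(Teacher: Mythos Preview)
Your argument is correct and follows exactly the approach sketched in the paper: apply the Leibniz rule to the parametrization \eqref{eq:RankOne} to obtain (1), and then deduce (2) from the fact that an $x$-adapted orthogonal basis of $T$ consists of non-isotropic, mutually orthogonal monomials. The paper's own proof is a one-line reference to the Leibniz rule plus ``straightforward consequence''; you have simply filled in the details.
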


\begin{proof}
Part (1) follows by applying the Leibniz rule to the
parameterisation \eqref{eq:RankOne}
of $X$; part (2) is a straightforward consequence.
\end{proof}
}

\begin{prop} \label{prop:RankOne}
Let $f \in T$ and let $x \in X$ be
non-isotropic. Then the following two statements are equivalent:
\begin{enumerate}
\item some (nonzero) scalar multiple of $x$ is a critical rank-one tensor for
$f$ and
\item a unique (nonzero) scalar multiple of $x$ is a critical rank-one tensor
for $f$; 
\end{enumerate}
and they imply the following statement:
\begin{enumerate}
\addtocounter{enumi}{2}
\item for each $\ell \in [p]$, $[f|x]_\ell \in \Wedge^2 V_\ell$ is
zero.
\end{enumerate}
Moreover, if $f$ is sufficiently general, then {\em every} nonzero $x
\in X$ satisfying (3) is non-isotropic and satisfies (1) and (2).
\end{prop}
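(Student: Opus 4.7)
For the equivalence (1)$\iff$(2), the critical condition $f-cx\perp T_{cx}X=T_xX$ combined with $x\in T_xX$ yields $(f|x)=c(x|x)$, so non-isotropy of $x$ uniquely determines $c=(f|x)/(x|x)$. For (1)$\Rightarrow$(3), I would fix an $x$-adapted orthogonal basis of $T$ and orthogonally decompose $f=f_\parallel+f_\perp$ with $f_\parallel\in T_xX$ and $f_\perp\in(T_xX)^\perp$; the critical condition becomes $f_\parallel=cx$, so $[f|x]_\ell=c[x|x]_\ell+[f_\perp|x]_\ell$. The first summand vanishes because $[v_\ell^{d_\ell}|v_\ell^{d_\ell}]=(v_\ell|v_\ell)^{d_\ell-1}v_\ell\wedge v_\ell=0$. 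For the second, I would expand $f_\perp$ in the basis of $(T_xX)^\perp$ furnished by Lemma~\ref{lem:Tangent}(2) --- the monomials $m$ with $\gcd(m,x)$ of degree at most $D-2$ --- and invoke Lemma~\ref{lem:Pairing}: a nonzero pairing $[m|x]_\ell$ would force $\gcd(m,x)$ to have degree exactly $D-1$, contradicting the preceding bound.

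For the ``moreover'' part, I would introduce the incidence variety
\[
Z:=\{(f,x)\in T\times(X\setminus\{0\}):\ [f|x]_\ell=0 \text{ for all }\ell\in[p]\}
\]
together with its projectivization $\bar Z\subset T\times\bar X$ in the $x$-direction, where $\bar X=\Pe^{n_1}\times\cdots\times\Pe^{n_p}$. Over the non-isotropic locus $\bar U\subset\bar X$, the forward analysis identifies the fiber of $\bar Z\to\bar X$ with the linear space $\CC x\oplus(T_xX)^\perp$ of dimension $\dim T-\sum_\ell n_\ell$, so $\bar Z|_{\bar U}$ is a linear bundle of total dimension exactly $\dim T$. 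The decisive point is the diagonal trick: for every non-isotropic $x_0\in X$ the pair $(x_0,[x_0])$ lies in $\bar Z|_{\bar U}$ and satisfies $(f|x_0)=(x_0|x_0)\ne0$, showing that the regular function $(f|x)$ does not vanish identically on $\bar Z|_{\bar U}$. Hence its zero locus inside $\bar Z|_{\bar U}$ has dimension at most $\dim T-1$, and its image in $T$ is a proper Zariski-closed subset. For $f$ outside this image (and outside the image of the exceptional part of $\bar Z$ supported over the isotropic locus), every nonzero $x\in H_f\cap X$ is non-isotropic with $(f|x)\ne 0$, and the forward direction then produces the unique critical rank-one multiple in (1)--(2).

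The main obstacle is verifying that the exceptional part $\bar Z|_{\bar X_{iso}}$ over $\bar X_{iso}:=\bar X\cap\{(x|x)=0\}$ also projects to a proper subvariety of $T$. Here Lemma~\ref{lem:Pairing} does not apply directly because an $x$-adapted orthogonal basis is unavailable when $v_\ell$ is isotropic, so one needs a substitute computation --- using an ``orthogonal-except-one'' basis built from $v_\ell^\perp$ together with a complementary vector paired nontrivially with $v_\ell$ --- to bound $\dim H_x$ for isotropic $x\in X$. Combined with $\dim\bar X_{iso}\le\dim\bar X-1$, such a bound forces $\dim\bar Z|_{\bar X_{iso}}<\dim T$, making its image in $T$ proper. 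Once both bad loci are under control, the generic statement follows by taking $f$ in the complement.
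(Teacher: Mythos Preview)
Your proof follows essentially the same route as the paper: adapted orthogonal bases together with Lemmas~\ref{lem:Tangent} and~\ref{lem:Pairing} for (1)$\iff$(2)$\Rightarrow$(3), and an incidence/dimension count over $\bar X$ for the converse. The paper carries out your ``orthogonal-except-one'' computation explicitly---finding that the fibre of $\bar Z$ over an isotropic $[x]$ still has dimension $\dim T-\sum_\ell n_\ell$---and replaces your diagonal trick for $c\neq 0$ by the equivalent observation that the locus $\{f:\exists\,x\text{ with }f\in(T_xX)^\perp\}$ is the cone over the dual variety of $\Pe X$, hence of positive codimension.
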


The pairing in item (3) is the pairing from \eqref{eq:Pairing}. 

\begin{proof}
For the equivalence of the first two statements, we note that if $cx,c'x$
with $c,c' \neq 0$ are critical rank-one tensors for $f$, then $T_{cx}
X = T_{c'x} X=T_x X$ and $f-cx \perp T_x X$ and $f-c'x \perp T_x X$.
Since $x \in T_x X$, we find that $(c-c')x \perp x$, and using that $x$
is non-isotropic we find that $c=c'$.

To prove that (1) implies (3), write $x$ as in \eqref{eq:RankOne} and
extend each $v_\ell$ to an orthogonal basis of $V_\ell$, so as to obtain
an $x$-adapted orthogonal basis of $T$. Now assume that $f-cx \perp T_x
X$. Then by Lemma~\ref{lem:Tangent}, $f-cx$ is a linear combination of
degree-$D$ monomials whose gcds with $x$ have degrees at most $D-2$.
Hence by Lemma~\ref{lem:Pairing}, $[x|f-cx]_\ell=0$. By the skew-symmetry,
$[x|x]_\ell=0$, so $[x|f]_\ell=0$.

For the last statement, consider an $x=v_1^{d_1} \otimes \cdots
\otimes v_p^{d_p} \in X$ where, say, $v_1,\ldots,v_a$ with $a>0$ are isotropic
but the remaining factors are not. Extend each $v_\ell$, $\ell > a$
to an orthogonal basis of $V_\ell$, and for $v_\ell$ with $\ell \leq a$
find an isotropic $w_\ell \in V_i$ with $(v_\ell|w_\ell) =1$ and extend
$v_\ell,w_\ell$ with an orthogonal basis of the orthogonal complement
of $\langle v_\ell, w_\ell \rangle^\perp$ to a basis of $V_\ell$.  In the
corresponding (non-orthogonal) monomial basis of $T$, the monomials $y$
with $[y|x]_\ell \neq 0$ for $\ell \leq a$ are those of the form
\[ w_1^{d_1} \otimes \cdots \otimes w_\ell^{d_\ell-1} u_\ell \otimes
\cdots \otimes w_a^{d_a} \otimes v_{a+1}^{d_{a+1}} \otimes \cdots
\otimes v_p^{d_p} \]
where $u_\ell$ is a basis vector of $V_\ell$ that is distinct from
$v_\ell$ but possibly equal to $w_\ell$. These monomials all satisfy
$[y|x]_i=0$ for $i \neq \ell$. Similarly, the monomials $y$ with
$[y|x]_\ell \neq 0$ for $\ell > a$ are those of the form 
\[ w_1^{d_1} \otimes \cdots \otimes w_a^{d_a} \otimes
v_{a+1}^{d_{a+1}} \otimes \cdots \otimes v_l^{d_l-1} u_l \otimes
\cdots \otimes v_p^{d_p} \]
with $u_\ell$ a basis vector of $V_\ell$ distinct from $v_\ell$; 
they, too, satisfy $[y|x]_i=0$ for $i \neq \ell$. 
The remaining monomials span the space of $f$s with $[x|f]_\ell=0$
for all $\ell$; this space therefore has dimension
\[ \dim T - (n_1+\ldots+n_p), \]
and it does not change when we scale $x$. Since the isotropic projective
points $\langle x \rangle \in \Pe T$ form a subvariety of positive
codimension in the $(n_1+\ldots+n_p)$-dimensional projective variety
$\Pe X$, the locus of all $f$ for which there is a nonzero isotropic $x
\in X$ with $[f|x]_\ell=0$ for all $\ell$ has dimension less than
$\dim T$.

Now assume that $f$ is sufficiently general and let $x \in X \setminus
\{0\}$ satisfy $[x|f]_\ell=0$ for all $\ell$. By the above, $x$ is
non-isotropic. Suppose that $f$, expanded on the $x$-adapted orthogonal
basis, contains a monomial $y$ whose gcd with $x$ has degree exactly
$D-1$. If $y$ agrees with $x$ except in the factor $S^{d_\ell} V_\ell$
where it equals $v_\ell^{d_\ell-1} u_\ell$, then in $[x|f]_\ell$,
expanded on the standard basis of $\Wedge^2 V_\ell$ relative to the
chosen basis of $V_\ell$, the term $v_\ell \wedge u_\ell$ has a nonzero
coefficient. Hence $[x|f]_\ell$ is nonzero, a contradiction.

Therefore, $f$ contains only monomials whose gcds with $x$ have degrees
at most $D-2$, and possibly the monomial $x$ itself. Then $f-c x \perp
T_x X$ for a unique constant $c$. By generality of $f$, it does not lie
in $(T_x X)^\perp$ for any $x \in X \setminus \{0\}$ (the union of these
orthogonal complements is the cone over the variety dual to the projective
variety defined by $X$, and of positive codimension). Hence $c \neq 0$,
and $cx$ is a critical rank-one tensor for $f$.
\end{proof}

{
\begin{remark}
The implication (1) $\Longrightarrow$ (3) in Proposition \ref{prop:RankOne} holds without the assumption of non-isotropy of $x$.
This follows from the fact that the ED correspondence $$\{(x,f)\in X\times V|\ x\textrm{\ is critical for\ }f\}$$
is a irreducible variety (see \cite[\S 4 and Lemma 2.1]{DHOST}) and the nonempty open part in it where $x$ is non-isotropic
lies in the variety defined by $[f|x]_\ell=0\ \forall\ell \in [p]$ by Proposition \ref{prop:RankOne}.
\end{remark}
}

\subsection*{The critical space}

In view of Proposition~\ref{prop:RankOne}, we introduce the following
notion.

\begin{defn}
For a tensor $f \in T$, {\em the critical space $H_f \subseteq T$ of $f$} 
is defined as 
\[ H_f:=\{g \in T \mid [f|g]_\ell=0 \text{ for all } \ell \in [p]\}.
\]
\end{defn}

\begin{remark}\label{rem:fincritical} By the skew-symmetry, it follows immediately that
$f\in H_f$.
\end{remark}

\begin{remark}
In the case of binary forms ($\dim V=2$), $H_f$ is the hyperplane orthogonal to $D(f)$ \cite{OT}. In the case of ordinary tensors,
$H_f$ was first defined in \cite{OP} where it was called singular space, but
in view of the results in this paper we feel that {\em critical space}
is a better name.
\end{remark} 

Proposition~\ref{prop:RankOne} establishes that the non-isotropical
critical rank-one tensors all lie inside $H_f$; hence for a sufficiently
general $f$, all critical rank-one tensors lie in $H_f$.  In the next
subsection we will establish an analogous statement for higher ranks.

Note that the number of linear conditions for $g$ to lie in $H_f$
is at most $\sum_{\ell=1}^p \dim \Wedge^2 V_\ell = \sum_{\ell=1}^p
\binom{n_\ell+1}{2}$---the linear conditions in the definition may not
all be linearly independent. In Proposition~\ref{prop:dim} we will see
that, assuming the triangle inequalities from Theorem~\ref{thm:Main} and
assuming that $f$ is sufficiently general, equality holds.

\subsection*{Higher rank}

We will now establish a generalization of Proposition~\ref{prop:RankOne}
to higher-rank tensors. 

\red{
\begin{defn}
Let $f \in T$ and let $k$ be any nonnegative integer.
A {\em critical rank-at-most-$k$ tensor} for $f$
is a tensor $g \in kX$ such that $f-g \perp T_g \Sec_k(X)$.
\end{defn}
}

\red{Note that by \cite[Lemma 4.2]{DLOT}, all the critical
rank-at-most-$k$ tensors for a sufficiently general $f \in T$ are smooth
points of $\Sec_k(X)$ and can be written as a sum of $k$ non-isotropic
rank-one tensors. Moreover, if we assume that $k$ is at most the generic
rank of tensors in $T$, then these critical tensors to a sufficiently
general $f$ have rank equal to $k$. If $k$ is at least the generic
rank of tensors in $T$, then the only critical
rank-at-most-$k$ tensor for a sufficiently general $f$ is $f$ itself.}

\begin{prop} \label{prop:HigherRank}
Let $f \in T$ be sufficiently general \red{and let $k$ be a nonnegative
integer}. Then all the critical \red{rank-at-most-$k$} tensors for $f$ lie in
the critical space $H_f$.
\end{prop}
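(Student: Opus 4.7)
The plan is to reduce the rank-$k$ case to the rank-one case via Terracini's lemma. By \cite[Lemma 4.2]{DLOT}, for a sufficiently general $f$ every critical rank-at-most-$k$ tensor $g$ for $f$ is a smooth point of $\Sec_k(X)$ and admits a decomposition $g=x_1+\cdots+x_k$ with each summand $x_i\in X$ non-isotropic. Terracini's lemma then identifies
\[ T_g \Sec_k(X)=T_{x_1}X+\cdots+T_{x_k}X, \]
so the condition $f-g\perp T_g\Sec_k(X)$ splits into the $k$ separate conditions $f-g\perp T_{x_i}X$, one for each $i\in\{1,\ldots,k\}$.

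Setting $h_i:=f-\sum_{j\neq i}x_j$, the $i$-th of these reads $h_i-x_i\perp T_{x_i}X$, which says that $x_i$ is a critical rank-one tensor for $h_i$. Since each $x_i$ is non-isotropic, Proposition~\ref{prop:RankOne}, implication (1) $\Rightarrow$ (3), applies to the pair $(h_i,x_i)$ and yields $[h_i|x_i]_\ell=0$ for every $\ell\in[p]$, which rearranges to
\[ [f|x_i]_\ell=\sum_{j\neq i}[x_j|x_i]_\ell. \]

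Summing over $i$ and using the skew-symmetry of $[\cdot|\cdot]_\ell$ (it takes values in $\Wedge^2 V_\ell$), each ordered pair $(i,j)$ with $i\neq j$ on the right is cancelled by its partner $(j,i)$ via $[x_j|x_i]_\ell+[x_i|x_j]_\ell=0$. Hence
\[ [f|g]_\ell=\sum_{i=1}^k[f|x_i]_\ell=\sum_{i\neq j}[x_j|x_i]_\ell=0 \]
for every $\ell\in[p]$, so $g\in H_f$, as claimed.

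The main obstacle is ensuring the genericity hypothesis holds uniformly in $k$: a single Zariski-dense open set of $f\in T$ must support the decomposition above for every relevant value of $k$. This is arranged by intersecting the finitely many open conditions furnished by \cite[Lemma 4.2]{DLOT} for $k=1,\ldots,r$, where $r$ denotes the generic rank in $T$; for $k\geq r$ the only critical rank-at-most-$k$ tensor for a sufficiently general $f$ is $f$ itself, and $f\in H_f$ by Remark~\ref{rem:fincritical}.
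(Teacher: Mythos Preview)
Your argument is correct and follows essentially the same route as the paper: decompose $g=\sum_i x_i$ with each $x_i$ non-isotropic, use $f-g\perp T_{x_i}X$ to obtain $[f-g|x_i]_\ell=0$, and then combine via bilinearity and skew-symmetry to conclude $[f|g]_\ell=0$. The only cosmetic differences are that the paper invokes merely the containment $T_g\Sec_k(X)\supseteq\sum_i T_{x_i}X$ (Terracini's full equality is not needed) and derives $[f-g|x_i]_\ell=0$ directly from Lemmas~\ref{lem:Tangent} and~\ref{lem:Pairing} rather than routing through Proposition~\ref{prop:RankOne}.
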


\begin{proof}
Let $g$ be a critical \red{rank-at-most-$k$} tensor. By generality of $f$, $g$
can be written as $x_1+\cdots+x_k$ with each $x_i \in X$ non-isotropic.
Then $T_g \Sec_k X \supseteq \sum_{i=1}^k T_{x_i} X$, so that for each $i \in
[k]$ we have $f-g \perp T_{x_i} X$. By
Lemma~\ref{lem:Tangent} this means that, in the $x_i$-adapted
orthogonal basis, $f-g$ is a linear combination of monomials whose gcds
with $x_i$ have degrees at most $D-2$. Hence, by Lemma~\ref{lem:Pairing},
$[f-g|x_i]_\ell=0$ for all $\ell=1,\ldots,p$. We conclude that, for
each $\ell$,
\[ [f-g|g]_\ell=\sum_{i=1}^k [f-g|x_i]_\ell=0, \]
and therefore 
\[ [f|g]_\ell=[f-g|g]_\ell + [g|g]_\ell=0 + 0, \]
where in the last step we used that $[.|.]_\ell$ is
skew-symmetric. Hence $g \in H_f$.
\end{proof}

In the next section we compute the dimension of the space spanned by
the critical rank-one tensors for a general tensor, and show that this
space equals $H_f$.

\section{The scheme of critical rank-one tensors}
\label{sec:Span}

\subsection*{Critical rank-one tensors as the zero locus of a vector
bundle section}

Let $f \in T=\bigotimes_{\ell=1}^p S^{d_\ell} V_\ell$ be a tensor. We
assume that $p \geq 2$, $d_\ell \geq 1$, and $\dim V_\ell=n_\ell+1 \geq 1$
for all $\ell$. We adapt the notation of \cite[Section 5.1]{OP} to our
current setting.

Consider the Segre-Veronese variety $\Pe X=\Pe V_1\times\ldots\times\Pe
V_p$ embedded with $\cO(d_1,\ldots,d_p)$ in $\Pe T$; so $\Pe X$
is the projective variety associated to the affine cone $X \subseteq T$.
Let $\pi_\ell:\Pe X \rightarrow \Pe V_\ell$ be the projection on the
$\ell$-th factor and set $N:=\dim \Pe X = n_1+\ldots+n_p$. For each $\ell
\in [p]$ let $\cQ_\ell$ be the quotient bundle on $\Pe V_\ell$, whose
fibre over a point $\langle v \rangle$ is $V_\ell/\langle v \rangle$. From
these quotient bundles, we construct the following vector bundles on
$\Pe X$:
\[ \cE:=\bigoplus_{\ell=1}^{p} \cE_l \quad \text{where}
\quad \cE_l:= (\pi_\ell^*\mathcal{Q_\ell})\otimes
\cO(d_1,\ldots,d_{\ell-1},d_\ell-1,d_{\ell+1},\ldots,d_p). \]
Note that $\cE$ has rank $N$.  The fibre of $\cE_\ell$ over a point
$v:=(\langle v_1 \rangle,\ldots,\langle v_p \rangle) \in \Pe X$
consists of polynomial maps $\prod_{i=1}^p \langle v_i \rangle
\to V_\ell/\langle v_\ell \rangle$ that are multi-homogeneous of
multi-degree $(d_1,\ldots,d_\ell-1,\ldots,d_p)$.  The tensor $f$
yields a global section of $\cE_\ell$ which over the point $v$ is
the map sending $(c_1 v_1,\ldots,c_p v_p)$ to the natural pairing of
$f$ with $(c_1 v_1)^{d_1} \cdots (c_\ell v_\ell)^{d_\ell-1} \cdots
(c_p v_p)^{d_p}$---a vector in $V_\ell$---taken modulo $\langle v_\ell
\rangle$. Combining these $p$ sections, $f$ yields a global section $s_f$
of $\cE$. By Proposition~\ref{prop:RankOne}, for $f$ sufficiently general,
the tensor $x:=v_1^{d_1} \otimes \cdots
\otimes v_p^{d_p}$ is a nonzero scalar multiple of a critical rank-one
tensor for $f$ if and only if the point $(\langle v_1 \rangle, \ldots,
\langle v_p \rangle)$ is in the zero locus $Z_f$ of the section $s_f$. In
\cite{FriOtt}, this is used to compute the cardinality of $Z_f$ for $f$
sufficiently general as the top Chern class of $\cE$. Our current task is
different: we want to show that, if we assume the triangle inequalities of
Theorem~\ref{thm:Main} and that $f$ is sufficiently general, the linear
span $\langle Z_f \rangle$ equals the projectivised critical space $\Pe
H_f$; this is the second part of Theorem~\ref{thm:Main}.

\subsection*{Bott's formulas and a consequence}

Our central tool will be the following formulas for the cohomology
of vector bundles over projective spaces \cite{OSSBott}. Recall that
$\Omega_{\Pe^n}^r (k)$ is the $\cO(k)$-twisted sheaf of differential
$r$-forms on $\Pe^n$.

\begin{lem}[Bott's formulas]
For $q,n,r \in \ZZ_{\geq 0}$ and $k \in \ZZ$ we have
\[ h^q(\Pe^n,\Omega_{\Pe^n}^r(k)) 
=
\begin{cases}
\binom{k+n-r}{k} \binom{k-1}{r} & \text{if $q=0 \leq r \leq n$ and
$k>r$,} \\
1 & \text{if $0 \leq q=r \leq n$ and $k=0$,}\\
\binom{-k+r}{-k} \binom{-k-1}{n-r} & \text{if $q=n \geq r \geq 0$ and
$k<r-n$, and}\\
0 & \text{otherwise.}
\end{cases}
\]
\end{lem}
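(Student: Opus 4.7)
The plan is a standard induction on $r$, reducing the computation of $h^q(\Pe^n,\Omega^r_{\Pe^n}(k))$ to the well-known cohomology of line bundles on projective space via the Euler sequence.

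First I would recall the Euler sequence
\[ 0 \to \Omega^1_{\Pe^n} \to \cO_{\Pe^n}(-1)^{\oplus (n+1)} \to \cO_{\Pe^n} \to 0 \]
and the induced short exact sequences obtained by taking exterior powers: for every $r\geq 1$,
\[ 0 \to \Omega^r_{\Pe^n}(k) \to \cO_{\Pe^n}(k-r)^{\oplus \binom{n+1}{r}} \to \Omega^{r-1}_{\Pe^n}(k) \to 0. \]
I would then invoke the classical formula for $h^q(\Pe^n,\cO(m))$, namely $\binom{m+n}{n}$ for $q=0$ and $m \geq 0$, $\binom{-m-1}{n}$ for $q=n$ and $m \leq -n-1$, and zero otherwise, as the base case $r=0$.

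From here the computation splits according to the three regimes in the statement. In the regime $k > r$, the middle term $\cO(k-r)^{\binom{n+1}{r}}$ has cohomology only in degree $0$; an induction on $r$ and a telescoping of the alternating sum of binomials produced by the long exact sequence (via Vandermonde/hockey-stick identities) yields the stated closed form $\binom{k+n-r}{k}\binom{k-1}{r}$ for $h^0$, while all higher cohomology vanishes. The regime $k < r-n$ is dual: the middle term contributes only in degree $n$, and the same inductive pattern (or a direct application of Serre duality $\Omega^r(k)^\vee \cong \Omega^{n-r}(-k) \otimes \omega_{\Pe^n}^{-1}$ if one prefers) delivers the second formula. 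In the intermediate regime, the middle term has no cohomology at all, so the connecting maps of the long exact sequence provide isomorphisms $H^q(\Omega^r(k)) \cong H^{q-1}(\Omega^{r-1}(k))$; iterating $r$ times one arrives at $H^{q-r}(\cO(k))$, which is one-dimensional precisely when $k=0$ and $q=r$, proving the middle case and the vanishing statement in the catch-all ``otherwise''.

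The main obstacle is not conceptual but combinatorial: verifying that the alternating sums of products of binomial coefficients produced by iterating the long exact sequence really collapse into the clean closed forms of the lemma, and paying careful attention to the boundary values $k=r$, $k=r-n$, $r=0$, $r=n$ where the case distinction switches. Should the bookkeeping prove unwieldy, a clean alternative I would consider is to exploit the $\mathrm{SL}_{n+1}$-equivariance of $\Omega^r(k)$ and apply Borel-Weil-Bott for $\Pe^n=\mathrm{SL}_{n+1}/P$, reading off the cohomology from the weight $(k-r,\underbrace{-1,\dots,-1}_{r},0,\dots,0)+\rho$: regularity of this weight distinguishes the three regimes, and the Weyl group translation determines both the cohomological degree and the irreducible representation that appears, whose dimension matches the binomial product in the statement.
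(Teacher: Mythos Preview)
The paper does not prove this lemma: it is stated as a quotation, with the reference \cite{OSSBott} (Okonek--Schneider--Spindler) given immediately before the statement and no proof supplied. Your sketch is correct and is in fact the standard argument one finds in that reference: induct on $r$ using the exact sequences $0 \to \Omega^r(k) \to \cO(k-r)^{\binom{n+1}{r}} \to \Omega^{r-1}(k) \to 0$ obtained from the Euler sequence, reduce to the known cohomology of line bundles, and use Serre duality (or symmetry of the computation) to pass between the $q=0$ and $q=n$ regimes. The Borel--Weil--Bott alternative you mention is equally valid. There is nothing to compare here beyond noting that the authors chose to quote rather than reprove a classical fact.
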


A consequence featuring the triangle inequalities of
Theorem~\ref{thm:Main} is the following. 

\begin{lem} \label{lem:ConsBott}
Suppose that $n_\ell \leq \sum_{i \neq \ell} n_{i}$ holds for all $\ell$ with
$d_\ell=1$. Let $k \geq 2$ be an integer, $q_1,\ldots,q_p$ be nonnegative
integers with $\sum_{\ell=1}^p q_\ell < k$ and $r_1,\ldots,r_p$ be
nonnegative integers with $\sum_{\ell=1}^p r_\ell =k$. Then
\[ \bigotimes_{\ell=1}^p H^{q_\ell}(\Pe V_\ell,
\Omega_{\Pe V_\ell}^{r_\ell}(-d_\ell(k-1)+2r_\ell)) =0. \]
\end{lem}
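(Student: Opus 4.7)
The plan is to argue by contradiction: assume that every factor
\[ H^{q_\ell}\bigl(\Pe V_\ell,\Omega^{r_\ell}_{\Pe V_\ell}(-d_\ell(k-1)+2r_\ell)\bigr) \]
is nonzero. Bott's formulas then force each $\ell \in [p]$ into exactly one of three regimes: (a) $q_\ell=0$, $r_\ell \leq n_\ell$, and $r_\ell > d_\ell(k-1)$; (b) $q_\ell = r_\ell$ with $2r_\ell = d_\ell(k-1)$; or (c) $q_\ell = n_\ell$, $r_\ell \leq n_\ell$, and $r_\ell + n_\ell < d_\ell(k-1)$. The goal is to show that these possibilities are incompatible with $\sum r_\ell = k$ and $\sum q_\ell < k$, using the triangle inequalities exactly once at the end.

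The key observation is to examine the sign of $r_\ell - q_\ell$ in each regime: it is $0$ in (b), equals $r_\ell - n_\ell \leq 0$ in (c), and equals $r_\ell \geq 0$ in (a). Since $\sum_\ell (r_\ell - q_\ell) = k - \sum_\ell q_\ell > 0$ by hypothesis, at least one index $\ell_0$ must sit in regime (a) with $r_{\ell_0} > 0$. If $d_{\ell_0} \geq 2$, then (a) gives $r_{\ell_0} > d_{\ell_0}(k-1) \geq 2(k-1) \geq k$, contradicting $r_{\ell_0} \leq \sum_\ell r_\ell = k$ since $k \geq 2$. Hence $d_{\ell_0} = 1$, and then $r_{\ell_0} > k-1$ combined with $r_{\ell_0} \leq k$ forces $r_{\ell_0} = k$ and $r_i = 0$ for every $i \neq \ell_0$.

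For each $i \neq \ell_0$ the value $r_i = 0$ excludes regime (a) outright and excludes regime (b) because $d_i(k-1) \geq 1$, so $i$ must be in regime (c), giving $q_i = n_i$. The hypothesis $\sum q_\ell < k$ then reads $\sum_{i \neq \ell_0} n_i < k$, while the constraint $r_{\ell_0} \leq n_{\ell_0}$ from regime (a) yields $n_{\ell_0} \geq r_{\ell_0} = k$. Together these give $n_{\ell_0} > \sum_{i \neq \ell_0} n_i$, which contradicts the triangle inequality precisely because $d_{\ell_0} = 1$. The main obstacle is the bookkeeping that rules out mixtures of cases and shows regime (a) is unavoidable; once that is set up, the triangle hypothesis and the bound $k \geq 2$ deliver the contradiction almost mechanically.
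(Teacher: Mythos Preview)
Your proof is correct and follows essentially the same route as the paper's. Both arguments assume every factor is nonzero, use Bott's formulas to observe that regimes (b) and (c) give $q_\ell \geq r_\ell$ while $\sum q_\ell < \sum r_\ell = k$ forces some index into regime (a), deduce $d_{\ell_0}=1$ and $r_{\ell_0}=k$ from $r_{\ell_0}>d_{\ell_0}(k-1)$ and $r_{\ell_0}\leq k$, push all other indices into regime (c), and finish with $n_{\ell_0}\geq k>\sum_{i\neq\ell_0} n_i$ contradicting the triangle inequality; your explicit bookkeeping of the signs of $r_\ell-q_\ell$ is a slightly cleaner way of phrasing the paper's observation that not all factors can fall under the second or third line of Bott's formulas.
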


\begin{proof}
Assume that all factors in the tensor product are nonzero.

First, if all of the factors were nonzero by virtue of the second and
third line in Bott's formulas, then we would have $q_\ell \geq r_\ell$ for
all
$\ell$, and hence $k>\sum_\ell q_\ell \geq \sum_\ell r_\ell=k$, a contradiction.

So some factor is nonzero by virtue of the first line in Bott's
formulas;
without loss of generality this is the first factor. Hence we have
$q_1=0
\leq r_1 \leq n_1$ and $-d_1(k-1)+2r_1>r_1$. This last inequality
reads
$r_1>d_1(k-1)$. Combining this with $\sum_\ell r_\ell=k$ and the fact that
$d_1$
is a positive integer, we find that
$r_1=k$, $d_1=1$, and $r_\ell=0$ for $\ell>1$.
In particular, there are no $\ell>1$ for which the first line in Bott's
formulas applies.

For any $\ell > 1$, if the second line applies, then
$0=r_\ell=q_\ell=-d_\ell(k-1)+2r_\ell$, which contradicts that both
$d_\ell$ and
$k-1$
are positive. Hence the third line applies for all $\ell>1$, and in
particular we have $q_\ell=n_\ell$. But then
\[ n_1 \geq r_1 = k > \sum_{l=1}^p q_l =
\sum_{l=2}^p
n_l,\]
which together with $d_1=1$ contradicts the triangle inequality in the
lemma.
\end{proof}

\subsection*{Vanishing cohomology}

The vanishing result in this subsection uses Lemma~\ref{lem:ConsBott}
and the following version of K\"unneth's formula.

\begin{lem}[K\"unneth's formula]
For vector bundles $\mathcal{G}_\ell$ on $\Pe V_\ell$ for
$\ell=1,\ldots,p$ and a nonnegative integer $q$ we have 
\[ H^q(\Pe X,\bigotimes_\ell \pi_\ell^* \mathcal{G}_\ell)
\cong \bigoplus_{q_1+\ldots+q_p=q} \bigotimes_\ell H^{q_\ell}(\Pe
V_\ell,\mathcal{G}_\ell), \]
where the sum is over all $p$-tuples of nonnegative integers summing
to $q$.
\end{lem}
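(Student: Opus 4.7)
The plan is to reduce the statement to the algebraic K\"unneth formula for complexes of $\CC$-vector spaces by computing the sheaf cohomology via \v{C}ech cohomology on a product of standard affine open covers.

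First, I would fix for each $\ell$ the standard open affine cover $\mathcal{U}_\ell=\{U_{\ell,0},\ldots,U_{\ell,n_\ell}\}$ of $\Pe V_\ell$ (complements of coordinate hyperplanes) and take $\mathcal{U}:=\{U_{1,i_1}\times\cdots\times U_{p,i_p}\}$ as open cover of $\Pe X$. Every finite intersection of members of $\mathcal{U}$ is a product of principal affine opens, hence affine, so by Cartan's theorem together with Serre vanishing on affines, the \v{C}ech complex $\check{C}^\bullet(\mathcal{U},\bigotimes_\ell\pi_\ell^*\mathcal{G}_\ell)$ computes $H^\bullet(\Pe X,\bigotimes_\ell\pi_\ell^*\mathcal{G}_\ell)$, and the same holds for each single factor; coherence of all sheaves involved is what allows this reduction.

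Next, I would identify the \v{C}ech complex of $\mathcal{U}$ with the (total complex of the) tensor product of the individual \v{C}ech complexes. The key observation is that on each open $\prod_\ell U_{\ell,i_\ell}$, which equals the product of the $U_{\ell,i_\ell}$ as affine schemes, one has a natural isomorphism
\[ \Gamma\Bigl(\textstyle\prod_\ell U_{\ell,i_\ell},\,\bigotimes_\ell\pi_\ell^*\mathcal{G}_\ell\Bigr) \cong \bigotimes_\ell \Gamma(U_{\ell,i_\ell},\mathcal{G}_\ell). \]
Assembling these isomorphisms over all multi-indices and comparing differentials with the standard Koszul sign conventions yields an isomorphism of complexes $\check{C}^\bullet(\mathcal{U},\bigotimes_\ell\pi_\ell^*\mathcal{G}_\ell)\cong \bigotimes_\ell \check{C}^\bullet(\mathcal{U}_\ell,\mathcal{G}_\ell)$.

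Finally, since each $\check{C}^\bullet(\mathcal{U}_\ell,\mathcal{G}_\ell)$ is a bounded complex of $\CC$-vector spaces and every $\CC$-vector space is flat, the algebraic K\"unneth formula over a field applies without Tor correction terms, giving
\[ H^q\Bigl(\bigotimes_\ell\check{C}^\bullet(\mathcal{U}_\ell,\mathcal{G}_\ell)\Bigr) \cong \bigoplus_{q_1+\cdots+q_p=q}\bigotimes_\ell H^{q_\ell}\bigl(\check{C}^\bullet(\mathcal{U}_\ell,\mathcal{G}_\ell)\bigr). \]
Combining this with Cartan's theorem on both sides completes the argument. I do not anticipate a real obstacle; the only careful bookkeeping is the verification that the \v{C}ech differential of the product cover matches the differential of the total tensor-product complex up to the appropriate shuffle signs, which is standard.
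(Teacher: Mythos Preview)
Your outline is correct and follows the standard derivation of the K\"unneth formula in sheaf cohomology over a field: compute with \v{C}ech complexes on affine product covers, identify the resulting complex with the tensor product of the factor complexes, and invoke algebraic K\"unneth (no Tor terms since we work over $\CC$). The only point that deserves a bit more care than you indicate is the identification of the \v{C}ech complex of the product cover with the total complex of $\bigotimes_\ell \check{C}^\bullet(\mathcal{U}_\ell,\mathcal{G}_\ell)$: these are not literally equal term by term, since a $q$-simplex of the product index set $\prod_\ell\{0,\ldots,n_\ell\}$ is not the same thing as a tuple of simplices of total degree $q$. One makes this precise either by passing through the ordered (non-alternating) \v{C}ech complex, or by setting up a $p$-fold multicomplex whose total complex is manifestly the tensor product and showing via an acyclicity argument that it also computes $H^\bullet(\Pe X,\cdot)$. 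Either route is routine, so your disclaimer that this is ``standard bookkeeping'' is fair.

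As for comparison with the paper: the paper offers no proof at all. The lemma is stated as a known instance of the K\"unneth formula and used as a black box in the proof of Lemma~\ref{lema3-SV}. So there is nothing to compare your approach against; you have simply supplied a proof where the authors chose to cite the result.
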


\begin{lem}\label{lema3-SV}
Suppose that $n_\ell \leq \sum_{i\neq \ell} n_i$ holds for all $\ell$ such
that $d_\ell=1$. Let $k \geq 2$ be an integer and $q \in \{0,\ldots,k-1\}$. Then
we have
\[ 
H^q(\Pe X,(\bigwedge^{k}\cE^*) \otimes \cO(d_1,\ldots,d_p))=0. 
\]
\end{lem}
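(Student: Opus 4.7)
The plan is to decompose the bundle $\bigwedge^k \cE^*$ as a direct sum of external tensor products of twists of sheaves of differential forms on the $\Pe V_\ell$, then apply K\"unneth's formula and reduce the vanishing to Lemma~\ref{lem:ConsBott}.

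\textbf{Step 1 (decompose the exterior power).} Since $\cE = \bigoplus_{\ell=1}^p \cE_\ell$, standard exterior-algebra yields
\[ \bigwedge^k \cE^* = \bigoplus_{r_1+\cdots+r_p = k} \bigotimes_{\ell=1}^p \bigwedge^{r_\ell} \cE_\ell^*, \]
and a summand with some $r_\ell > n_\ell$ vanishes because $\cQ_\ell$ has rank $n_\ell$, so we may restrict attention to $r_\ell \leq n_\ell$ for all $\ell$.

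\textbf{Step 2 (Euler sequence).} The dual of the tautological sequence on $\Pe V_\ell$ gives $\cQ_\ell^* \cong \Omega^1_{\Pe V_\ell}(1)$, and therefore $\bigwedge^{r_\ell} \cQ_\ell^* \cong \Omega^{r_\ell}_{\Pe V_\ell}(r_\ell)$.

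\textbf{Step 3 (identify each summand with a twisted form).} By the definition of $\cE_\ell$,
\[ \bigwedge^{r_\ell} \cE_\ell^* = \pi_\ell^* \bigl(\bigwedge^{r_\ell} \cQ_\ell^*\bigr) \otimes \cO(-r_\ell d_1,\ldots,-r_\ell(d_\ell-1),\ldots,-r_\ell d_p). \]
Substituting Step 2 and summing the multidegree contributions over $\ell$, then adding $(d_1,\ldots,d_p)$, the $j$-th component of the total twist of $\pi_j^* \Omega^{r_j}_{\Pe V_j}$ is
\[ d_j - d_j\!\!\sum_{\ell \neq j}\!\! r_\ell + \bigl(r_j - r_j(d_j-1)\bigr) = -d_j(k-1) + 2 r_j, \]
so the summand tensored with $\cO(d_1,\ldots,d_p)$ becomes
\[ \bigotimes_{\ell=1}^p \pi_\ell^* \Omega^{r_\ell}_{\Pe V_\ell}\bigl(-d_\ell(k-1) + 2r_\ell\bigr). \]

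\textbf{Step 4 (K\"unneth plus Bott).} K\"unneth's formula gives
\[ H^q\Bigl(\Pe X,\bigotimes_\ell \pi_\ell^* \Omega^{r_\ell}_{\Pe V_\ell}(-d_\ell(k-1)+2r_\ell)\Bigr) = \!\!\bigoplus_{q_1+\cdots+q_p=q}\!\! \bigotimes_\ell H^{q_\ell}\!\bigl(\Pe V_\ell,\Omega^{r_\ell}_{\Pe V_\ell}(-d_\ell(k-1)+2r_\ell)\bigr). \]
Since $q \leq k-1$, each tuple $(q_\ell)$ satisfies $\sum q_\ell < k$, while $\sum r_\ell = k$, so Lemma~\ref{lem:ConsBott} applies to each inner tensor product and forces it to vanish. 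Summing over all compositions $(r_1,\ldots,r_p)$ of $k$ yields the lemma.

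The only real obstacle is the bookkeeping in Step~3: one must carefully combine the pullback twist coming from $\Omega^{r_\ell}(r_\ell)$ with the explicit multidegree shift in the definition of $\cE_\ell$ and the outer $\cO(d_1,\ldots,d_p)$; the triangle-inequality hypothesis is not used here but is built into Lemma~\ref{lem:ConsBott}, where it prevents a possible nonvanishing contribution coming from the $d_\ell=1$ factors under Bott's first case.
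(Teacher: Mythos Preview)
Your proof is correct and follows essentially the same route as the paper's own argument: decompose $\bigwedge^k\cE^*$ via the wedge-of-a-direct-sum formula, identify each summand with an external tensor product of $\pi_\ell^*\Omega_{\Pe V_\ell}^{r_\ell}(-d_\ell(k-1)+2r_\ell)$ using $\cQ_\ell^*\cong\Omega^1_{\Pe V_\ell}(1)$, then apply K\"unneth and Lemma~\ref{lem:ConsBott}. The only cosmetic difference is your explicit remark that summands with $r_\ell>n_\ell$ vanish, which the paper omits (it is absorbed by Bott's formulas anyway).
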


\begin{proof}
First, 
\[ \cE^*=\bigoplus_{\ell=1}^{p}(\pi_\ell^*\mathcal{Q_\ell}^*)\otimes
\cO(-d_1,\ldots,-d_{\ell-1},-(d_\ell-1),-d_{\ell+1},\ldots,-d_p).
\]
A well-known formula for $k$-th wedge power of a direct sum yields
\[ 
\bigwedge^k\cE^*=\bigoplus_{r_1+\ldots+r_p=k}
\bigotimes_\ell 
\bigwedge^{r_\ell}(\pi_\ell^*\mathcal{Q_\ell}^*\otimes \cO(-d_1,
\ldots,-(d_\ell-1),\ldots,-d_p)). \]
Using 
$\bigwedge^r(\mathcal{F}\otimes
\cO(\omega))=(\bigwedge^r\mathcal{F})(\red{r}\omega)$,
$\cQ^*=\Omega^1(1)$, and
$\bigwedge^r (\Omega^1(1))=\Omega^{r}(r)$, 
we obtain
\[ 
\bigwedge^k\cE^*=\bigoplus_{r_1+\ldots+r_p=k}
\bigotimes_{\ell} (\pi_\ell^*\Omega_{\Pe V_\ell}^{r_\ell}(r_\ell)\otimes 
\cO(-r_\ell d_1,\ldots,-r_\ell(d_\ell-1),\ldots,-r_\ell d_p).
\]
Twisting by $\cO(d_1,\ldots,d_p)$, regrouping in
each projection, and using $\sum_\ell r_\ell=k$ we find:
\[ 
(\bigwedge^k\cE^*) \otimes \cO(d_1,\ldots,d_p)=\bigoplus_{r_1+\ldots+r_p=k}
\bigotimes_{\ell} \pi_\ell^*\Omega_{\Pe V_\ell}^{r_\ell}(-d_\ell(k-1)+2 r_\ell). 
\]
To compute $H^q$ of each summand we apply K\"unneth's formula, and
obtain subsummands which are exactly of the form in
Lemma~\ref{lem:ConsBott}, hence zero.
\end{proof}

\subsection{Comparing $\Pe H_f$ and $\langle Z_f \rangle$}

Assume that $f$ is sufficiently general in $T$.  By the first subsection
of this section and by Proposition~\ref{prop:RankOne}, $Z_f$ is contained
in the projectivised critical space $\Pe H_f$, hence so is $\langle
Z_f \rangle$.  Our goal now is to show that $\langle Z_f \rangle$ is
{\em equal} to $\Pe H_f$ and to compute its dimension. Both of these
goals are achieved through the following lemma. The section $s_f$ of
$\cE$ yields a homomorphism $\cE^* \to \cO$ of sheaves whose image is
contained in the ideal sheaf $\cI_{Z_f}$ of the zero locus of $s_f$.

\begin{lem} \label{lem:linforms}
Assume that for each $\ell \in [p]$ we have $n_\ell \leq \sum_{i \neq
\ell} n_i$ and that $f$ is sufficiently general. Then the induced
homomorphism $\cE^* \otimes \cO(d_1,\ldots,d_p) \to \cI_{Z_f} \otimes
\cO(d_1,\ldots,d_p)$ induces an isomorphism at the level of global
sections.
\end{lem}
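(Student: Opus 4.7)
The plan is to exploit the Koszul complex attached to the section $s_f\in H^0(\cE)$ and combine it with the vanishings established in Lemma~\ref{lema3-SV}.

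First, I would argue that for $f$ sufficiently general the section $s_f$ is regular, so that its zero scheme $Z_f$ has the expected codimension $\rk\cE=N$ in $\Pe X$. This transversality is implicit in the Chern-class enumeration of $|Z_f|$ in \cite{FriOtt}, which produces a nonzero integer and forces $Z_f$ to be a reduced finite subscheme for generic $f$. Under this transversality, the Koszul complex
\[
0\longrightarrow\bigwedge^N\cE^*\longrightarrow\cdots\longrightarrow\bigwedge^2\cE^*\longrightarrow\cE^*\longrightarrow\cI_{Z_f}\longrightarrow 0
\]
is exact, and remains so after twisting by $\cO(d_1,\ldots,d_p)$; its rightmost arrow then becomes the homomorphism appearing in the lemma.

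Second, I would cut the twisted resolution into short exact sequences. Set $K_0:=\cI_{Z_f}\otimes\cO(d_1,\ldots,d_p)$ and, for $i\ge 1$, let $K_i$ denote the kernel of $\bigwedge^i\cE^*\otimes\cO(d_1,\ldots,d_p)\to\bigwedge^{i-1}\cE^*\otimes\cO(d_1,\ldots,d_p)$. Exactness yields short exact sequences
\[
0\longrightarrow K_i\longrightarrow\bigwedge^i\cE^*\otimes\cO(d_1,\ldots,d_p)\longrightarrow K_{i-1}\longrightarrow 0
\qquad (i=1,\ldots,N-1),
\]
together with $K_{N-1}\cong\bigwedge^N\cE^*\otimes\cO(d_1,\ldots,d_p)$. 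The long exact sequence in the case $i=1$ reduces the lemma to the two vanishings $H^0(K_1)=0$ and $H^1(K_1)=0$.

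Third, I would chase cohomology up the resolution. From each short exact sequence and the vanishings
\[
H^q\bigl(\bigwedge^i\cE^*\otimes\cO(d_1,\ldots,d_p)\bigr)=0 \quad\text{for } 0\le q\le i-1,\ i\ge 2,
\]
provided by Lemma~\ref{lema3-SV}, the long exact sequences give isomorphisms $H^j(K_{i-1})\cong H^{j+1}(K_i)$ whenever $j\le i-2$ and injections whenever $j\le i-1$. Iterating,
\[
H^0(K_1)\cong H^1(K_2)\cong\cdots\cong H^{N-2}(K_{N-1})=H^{N-2}\bigl(\bigwedge^N\cE^*\otimes\cO(d_1,\ldots,d_p)\bigr)=0,
\]
and similarly
\[
H^1(K_1)\hookrightarrow H^2(K_2)\hookrightarrow\cdots\hookrightarrow H^{N-1}(K_{N-1})=H^{N-1}\bigl(\bigwedge^N\cE^*\otimes\cO(d_1,\ldots,d_p)\bigr)=0,
\]
the final vanishings again being furnished by Lemma~\ref{lema3-SV}. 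This delivers the desired isomorphism on $H^0$.

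The step I expect to require the most care is the first one: verifying that for sufficiently general $f$ the section $s_f$ is genuinely regular, so that the Koszul complex is exact rather than merely a complex. Once this transversality is secured, the rest of the argument is a purely formal cohomology chase powered entirely by Lemma~\ref{lema3-SV}.
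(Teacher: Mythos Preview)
Your proposal is correct and follows essentially the same route as the paper: break the (twisted) Koszul resolution of $\cI_{Z_f}$ into short exact sequences, then chase cohomology using the vanishings $H^q\bigl(\bigwedge^k\cE^*\otimes\cO(d_1,\ldots,d_p)\bigr)=0$ for $q\le k-1$ from Lemma~\ref{lema3-SV}. The only cosmetic differences are that the paper indexes by cokernels $\cF_k=\bigwedge^k\cE^*/\operatorname{im}\bigl(\bigwedge^{k+1}\cE^*\bigr)$ (so your $K_i$ is their $\cF_{i+1}$ twisted) and terminates the chain at $\cF_{N+1}=0$ rather than at $K_{N-1}\cong\bigwedge^N\cE^*\otimes\cO(d_1,\ldots,d_p)$, and that you are more explicit about the regularity of $s_f$ needed for exactness of the Koszul complex, which the paper takes for granted.
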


The following proof can be shortened considerably using spectral
sequences, but we found it more informative in its current form. To make
the formulas more transparent, we write $H^q(.)$ instead of $H^q(\Pe
X,.)$ everywhere.

\begin{proof}
To establish the desired isomorphism
\[ H^0(\cE^* \otimes \cO(d_1,\ldots,d_p)) 
\cong H^0(\cI_{Z_f} \otimes \cO(d_1,\ldots,d_p)) \]
we use the following Koszul complex (see, e.g., \cite[Chapter III,
Proposition 7.10A]{Hartshorne}):
\[ 0=\bigwedge^{N+1} \cE^* \rightarrow\bigwedge^N\cE^*\rightarrow\cdots\rightarrow\bigwedge^2
\cE^*\rightarrow \cE^*\rightarrow \cI_Z\rightarrow 0. \]
Letting $\cF_k$ be the quotient of $\bigwedge^k \cE^*$ by the image of
$\bigwedge^{k+1} \cE^*$, this yields the short exact sequence
\[ 0 \to \cF_2 \to \cE^* \to \cI_Z \to 0. \]
Tensoring with $\cO(d_1,\ldots,d_p)$ yields the short exact sequence
\[ 0 \to \cF_2 \otimes \cO(d_1,\ldots,d_p) \to 
\cE^* \otimes \cO(d_1,\ldots,d_p) \to 
\cI_Z \otimes \cO(d_1,\ldots,d_p) \to 0, \]
and this gives a long exact sequence in cohomology beginning with 
\begin{align*} 
0 &\to H^0(\cF_2 \otimes \cO(d_1,\ldots,d_p)) \to 
H^0 (\cE^* \otimes \cO(d_1,\ldots,d_p)) \to 
H^0 (\cI_Z \otimes \cO(d_1,\ldots,d_p))\\ & \to 
H^1 (\cF_2 \otimes \cO(d_1,\ldots,d_p)) \to 
\end{align*}
So to obtain the desired isomorphism we want that 
\[ H^q(\cF_2 \otimes \cO(d_1,\ldots,d_p))=0 \quad \text{for $q=0,1$.}\]
For each $k=2,\ldots,N$ we have the short exact sequence
\[ 0 \to \cF_{k+1} \to \bigwedge^k \cE^* \to \cF_k \to 0 \]
which yields the long exact sequence
\begin{align*} 
& \to 
H^{k-2} (\bigwedge^k \cE^* \otimes \cO(d_1,\ldots,d_p)) \to 
H^{k-2} (\cF_k \otimes \cO(d_1,\ldots,d_p)) \to 
H^{k-1} (\cF_{k+1} \otimes \cO(d_1,\ldots,d_p)) \\ & \to 
H^{k-1} (\bigwedge^k \cE^* \otimes \cO(d_1,\ldots,d_p)) \to 
H^{k-1} (\cF_k \otimes \cO(d_1,\ldots,d_p)) \to 
H^k (\cF_{k+1} \otimes \cO(d_1,\ldots,d_p)) \to 
\end{align*}
Using Lemma~\ref{lema3-SV} the two leftmost spaces are zero, so that
\begin{align*} 
H^{k-2} (\cF_k \otimes \cO(d_1,\ldots,d_p)) 
&\cong H^{k-1} (\cF_{k+1} \otimes \cO(d_1,\ldots,d_p)) \text{ and}\\
H^{k-1} (\cF_k \otimes \cO(d_1,\ldots,d_p)) 
&\subseteq H^{k} (\cF_{k+1} \otimes \cO(d_1,\ldots,d_p)) \text{ and}\\
\end{align*}
Hence using that $\cF_{N+1}=0$ we find that 
\begin{align*} 
H^0 (\cF_2 \otimes \cO(d_1,\ldots,d_p)) &\cong \cdots \cong H^{N-1}(\cF_{N+1} \otimes
\cO(d_1,\ldots,d_p)) = 0 \text{ and}\\
H^1 (\cF_2 \otimes \cO(d_1,\ldots,d_p)) &\subseteq \cdots \subseteq H^{N}(\cF_{N+1} \otimes
\cO(d_1,\ldots,d_p)) = 0, 
\end{align*}
as desired.
\end{proof}

\begin{prop} \label{prop:dim}
Suppose that for each $\ell \in [p]$ we have $n_\ell \leq \sum_{i \neq
\ell} n_i$ and that $f$ is sufficiently general. Then $\langle Z_f
\rangle=\Pe H_f$ and $\codim_T H_f=\sum_\ell \binom{n_\ell+1}{2}$.
\end{prop}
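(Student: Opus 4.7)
The plan is to combine Lemma~\ref{lem:linforms} with direct computations of the two sides of that isomorphism, and then to identify the induced map on global sections with the dual of the linear map $g\mapsto([f|g]_\ell)_\ell\colon T\to\bigoplus_\ell\bigwedge^2 V_\ell$. By Proposition~\ref{prop:RankOne} we already have $Z_f\subseteq\Pe H_f$, so once these identifications are in place both assertions of the proposition will follow by taking orthogonal complements with respect to the bilinear form on $T$.

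First I would compute $H^0(\cE^*\otimes\cO(d_1,\ldots,d_p))$. The twists on the $j\neq\ell$ factors of $\cE_\ell^*$ cancel against those of $\cO(d_1,\ldots,d_p)$, so each summand reduces to $\pi_\ell^*(\cQ_\ell^*\otimes\cO(1))$, and K\"unneth's formula (together with $H^0(\Pe V_j,\cO)=\CC$ for $j\neq\ell$) gives
\[
H^0(\cE^*\otimes\cO(d_1,\ldots,d_p))=\bigoplus_{\ell}H^0(\Pe V_\ell,\cQ_\ell^*(1)).
\]
Taking global sections in the dualized Euler sequence $0\to\cQ_\ell^*(1)\to V_\ell^*\otimes\cO(1)\to\cO(2)\to 0$ identifies $H^0(\cQ_\ell^*(1))$ with the kernel of the multiplication $V_\ell^*\otimes V_\ell^*\to S^2 V_\ell^*$, that is, with $\bigwedge^2 V_\ell^*$. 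Hence $\dim H^0(\cE^*\otimes\cO(d_1,\ldots,d_p))=\sum_\ell\binom{n_\ell+1}{2}$.

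Next I would unwind the sheaf homomorphism $\cE^*\otimes\cO(d_1,\ldots,d_p)\to\cO(d_1,\ldots,d_p)$ given by pairing with $s_f$. An element $\omega_\ell\in\bigwedge^2 V_\ell^*$ corresponds to the section $\langle v_\ell\rangle\mapsto\omega_\ell(v_\ell,\,\cdot\,)$ of $\cQ_\ell^*(1)$, and pairing this at $(\langle v_1\rangle,\ldots,\langle v_p\rangle)$ with the fibre of $s_f$, namely the class modulo $\langle v_\ell\rangle$ of the partial contraction of $f$ against $v_1^{d_1}\otimes\cdots\otimes v_\ell^{d_\ell-1}\otimes\cdots\otimes v_p^{d_p}$, produces, after comparison with the definition \eqref{eq:Pairing} of $[\,\cdot\,|\,\cdot\,]_\ell$, exactly the linear functional $g\mapsto\omega_\ell([f|g]_\ell)$ on $T$. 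Consequently the image of the induced map $\bigoplus_\ell\bigwedge^2 V_\ell^*\to T^*$ is the annihilator $H_f^\perp$ of the kernel $H_f$ of $g\mapsto([f|g]_\ell)_\ell$.

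Finally, Lemma~\ref{lem:linforms} identifies this same image with $H^0(\cI_{Z_f}\otimes\cO(d_1,\ldots,d_p))$, which under the canonical identification $H^0(\Pe X,\cO(d_1,\ldots,d_p))=T^*$ is the annihilator of the linear span of the affine cone over $Z_f$. Comparing the two descriptions yields $H_f^\perp=\langle Z_f\rangle^\perp$, whence $\Pe H_f=\langle Z_f\rangle$, and the codimension formula follows from the dimension count in the first step. The step I expect to be the main obstacle is the concrete identification in the penultimate paragraph: one must carefully trace the isomorphism $H^0(\cQ_\ell^*(1))\cong\bigwedge^2 V_\ell^*$ through the pairing with $s_f$ and confirm that the image is really all of $H_f^\perp$ rather than a proper subspace, so that general position of $f$ genuinely suffices to make $[f|\,\cdot\,]_\ell\colon T\to\bigwedge^2 V_\ell$ jointly surjective across $\ell\in[p]$.
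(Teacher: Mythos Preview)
Your proposal is correct and follows essentially the same route as the paper: both arguments compute $H^0(\cE^*\otimes\cO(d_1,\ldots,d_p))\cong\bigoplus_\ell\bigwedge^2 V_\ell^*$ (you via the Euler sequence, the paper via $\cQ^*=\Omega^1(1)$ and Bott), identify the induced map to $T^*$ as $\omega_\ell\mapsto(g\mapsto\omega_\ell([f|g]_\ell))$, and then invoke Lemma~\ref{lem:linforms} to match this with the linear forms vanishing on $Z_f$.

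Your stated ``obstacle'' is not one. That the image of $\bigoplus_\ell\bigwedge^2 V_\ell^*\to T^*$ equals $H_f^\perp$ is automatic linear algebra: $H_f$ is \emph{defined} as the common kernel of these forms, so their span is its annihilator. And the joint surjectivity of $g\mapsto([f|g]_\ell)_\ell$ (equivalently, injectivity of the dual map) is exactly what Lemma~\ref{lem:linforms} already gives you---it asserts an \emph{isomorphism}, not merely a surjection, onto $H^0(\cI_{Z_f}\otimes\cO(d_1,\ldots,d_p))$. So no additional genericity argument is needed beyond what is packed into that lemma.
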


\begin{proof}
Since $\Pe X$ is embedded by $\cO(d_1,\ldots,d_p)$, the
space of linear forms on $T$ vanishing on $Z_f$ is $H^0(\cI_{Z_f} \otimes
\cO(d_1,\ldots,d_p))$. By Lemma~\ref{lem:linforms} this space is
isomorphic to
\begin{align*} 
&H^0(\cE^* \otimes \cO(d_1,\ldots,d_p)) =
\bigoplus_\ell H^0(\pi_\ell^* \cQ_l^* \otimes
\cO(0,\ldots,1,\ldots,0))\\
&=\bigoplus_\ell H^0(\pi_{\ell^*}(\Omega^1_{\Pe
V_\ell}(2)))
=\bigoplus_\ell H^0(\Pe V_\ell, \Omega^1_{\Pe V_\ell}(2)),
\end{align*}
which by the first line in Bott's formulas has dimension $\sum_{\ell}
\binom{n_\ell+1}{2}$. This means that $\codim_{\Pe T} \langle Z_f
\rangle=\sum_\ell \binom{n_\ell+1}{2}$, so the second statement in the
proposition follows from the first statement. 

To establish the first statement, we spell out the map
\[ H^0(\Pe V_\ell, \cQ_\ell^* \otimes \cO(1))=H^0(\Pe V_\ell, \Omega_{\Pe V_\ell}^1 (2)) \to H^0(\cI_{Z_f} \otimes
\cO(d_1,\ldots,d_p)) \]
in greater detail. The space on the left is canonically $(\bigwedge^2
V_\ell)^*$, and an element $\xi$ in this space is mapped to the linear form
$T \to \CC, g \mapsto \xi([f|g]_\ell)$. As $\ell$ varies, these
are precisely the linear forms that cut out $H_f$. This
proves that $\Pe H_f = \langle Z_f \rangle$. 
\end{proof}

\begin{remark}
In general, for the equality $\langle Z_f \rangle=\Pe H_f$ we only need
that the linear equations cutting out $\Pe H_f$ also cut out $Z_f$,
i.e., we only need that the linear map in Lemma~\ref{lem:linforms}
is surjective.  One might wonder whether this surjectivity remains true
when the triangle inequalities fail. In the case of $(n_1+1) \times
(n_2+1)$-matrices, it does indeed---there we already knew the critical
rank-one approximations span the critical space---but for $p=3$ and
$2 \times 2 \times 4$-tensors (so that $n_3=3>1+1=n_1+n_2$) 
the space $\langle Z_f \rangle$ has dimension $6$ while computer
experiments suggest that the space $\Pe H_f$ has dimension $7$
, hence the surjectivity fails.  Still, in these experiments,
$f$ itself seems to lie in the span of $Z_f$. This leads to the open
problem whether our analogue of the Spectral Theorem and the Eckart-Young
Theorem persists when the triangle inequalities fail.
\end{remark}

\section{Proofs of the main results}
\label{sec:Final}

\begin{proof}[Proof of Theorem~\ref{thm:Main}]
The first statement is Proposition~\ref{prop:HigherRank}; the second
and \red{third} statement are Proposition~\ref{prop:dim}. \red{The last statement follows from Remark \ref{rem:fincritical}.}
\end{proof}

\begin{proof}[Proof of Corollaries~\ref{cor:tensor} and~\ref{cor:sym}.]
If $g$ is a real tensor of real rank at most $k$ closest to $f$, then one
can write it as $x_1+\ldots+x_k$ with $x_1,\ldots,x_k$ real points of $X$.
In particular, all of these points are non-isotropic, and the argument
of Proposition~\ref{prop:HigherRank} applies. Hence $g$ lies in $H_f$.
Now the result follows from Proposition~\ref{prop:dim}. The argument
applies, in particular, to $k$ equal to the rank of $f$, which gives
the last statement of the corollaries.
\end{proof}

Note that, if $f$ is {\em any} real tensor, then any real tensor of real
rank at most $k$ closest to $f$ lies in $H_f$ by the argument above. Only
for the conclusion that it lies in the span of the complex critical
rank-one tensors of $f$ do we use that $f$ is sufficiently general. We
do not know whether this generality is really needed.

Also note that we do not shed new light on the question of when for
sufficiently general $f$ there {\em exists} a closest real tensor of
rank at most $k$. For an update on the complex case see \cite{Qi}.

\end{document}